\renewcommand{\Re}{\mathbb{R}}
\newcommand{\weak}{\rightharpoonup}
\newcommand{\vph}{\vphantom{A^{A}_{A}}}
\newcommand{\sst}{\,\mid\,}
\newcommand{\dv}{\mathrm{div}}
\newcommand{\eqnref}[1]{(\ref{eqn#1})}
\newcommand{\bbG}{\mathbb{G}}
\newcommand{\bbS}{\mathbb{S}}
\newcommand{\bbU}{\mathbb{U}}
\newcommand{\bbZ}{\mathbb{Z}}
\newcommand{\dbar}{{\bar{d}}}
\newcommand{\ubar}{{\bar{u}}}
\newcommand{\vbar}{{\bar{v}}}
\newcommand{\xbar}{{\bar{x}}}
\newcommand{\Sbar}{{\bar{S}}}
\newcommand{\Tbar}{{\bar{T}}}
\newcommand{\ut}{\tilde{u}}
\newcommand{\Gt}{\tilde{G}}
\newcommand{\St}{\tilde{S}}
\newcommand{\calA}{{\cal A}}
\newcommand{\calF}{{\cal F}}
\newcommand{\calP}{{\cal P}}
\newcommand{\calT}{{\cal T}}
\newcommand{\norm}[1]{\| {#1} \|}
\newcommand{\Hone}{{H^1(\Omega)}}
\newcommand{\hone}[1]{\norm{#1}_\Hone}
\newcommand{\Honeo}{{H^1_0(\Omega)}}
\newcommand{\Hmone}{{H^{-1}(\Omega)}}
\newcommand{\hmone}[1]{\norm{#1}_\Hmone}
\newcommand{\Hdiv}{{H(\Omega;\dv)}}
\newcommand{\Ltwo}{{L^2(\Omega)}}
\newcommand{\ltwo}[1]{\norm{#1}_\Ltwo}
\newcommand{\Ltwoo}{{L^2(\Omega)/\Re}}
\newcommand{\ltwoo}[1]{\norm{#1}_\Ltwoo}
\newcommand{\Lp}{{L^p(\Omega)}}
\newcommand{\Lfour}{{L^4(\Omega)}}
\newcommand{\lfour}[1]{\norm{#1}_\Lfour}
\newcommand{\Lsix}{{L^6(\Omega)}}
\newcommand{\lsix}[1]{\norm{#1}_\Lsix}
\newcommand{\Gsym}{G^{sym}}
\newcommand{\Gskew}{G^{skw}}
\newcommand{\Curl}{\mathrm{Curl}}
\newcommand{\curl}{\mathrm{curl}}
\newcommand{\normS}[1]{\norm{#1}_\bbS}
\newcommand{\Lfourthirds}{{L^{4/3}(\Omega)}}
\newcommand{\lfourthirds}[1]{\norm{#1}_\Lfourthirds}
\newenvironment{theorem*}{{\bf Theorem}\em}{\rm\mbox{}}
\newtheorem{assumption}[theorem]{Assumption}
\newtheorem{notation}[theorem]{Notation}
\newtheorem{remark}{Remark}[section]
\begin{document}

\bibliographystyle{siam}

\title{Dual--Mixed Finite Element Methods for the Navier--Stokes Equations}

\author{Jason S. Howell\thanks{Department of Mathematics, College of Charleston, Charleston, SC 29424.  Email: {\tt howelljs@cofc.edu}.  This
    material is partially based upon work supported by the Center for
    Nonlinear Analysis (CNA) under the National Science Foundation
    Grant No. DMS--0635983.}  \and Noel J.
  Walkington\thanks{Department of Mathematics, Carnegie Mellon
    University, Pittsburgh, PA 15213.  Email: {\tt noelw@andrew.cmu.edu}. Supported in part by National
    Science Foundation Grants DMS--0811029 This work was also
    supported by the NSF through the Center for Nonlinear Analysis.}
}

\date{\today}
\maketitle

\begin{abstract}
  A mixed finite element method for the Navier--Stokes equations is
  introduced in which the stress is a primary variable. The
  variational formulation retains the mathematical structure of the
  Navier--Stokes equations and the classical theory extends naturally
  to this setting. Finite element spaces satisfying the associated
  inf--sup conditions are developed.
\end{abstract}

\begin{keywords}
Navier--Stokes equations, mixed methods.
\end{keywords}

\section{Introduction}\label{sec:intro}
The focus of this work is the development of mixed finite element
schemes for the stationary Navier--Stokes equations where the fluid
stress is a primary unknown of interest. The development of a
corresponding scheme for the Stokes problem has been recently
established \cite{howa09}; however, this scheme only computes the
symmetric part of the velocity gradient, so the extension to the
Navier--Stokes equations is not direct since the convective term
involves the full gradient.  Below we propose a mixed method based
upon the usual skew--symmetric formulation of the Navier--Stokes
equations that allows for direct approximation of the stress and the
velocity gradient.  Specifically, we write the Navier--Stokes
equations as
\begin{equation}
\begin{gathered}
(1/2) (u.\nabla) u-\dv(S) = f, \\
S =\calA(\nabla u)-pI - (1/2) u \otimes u,\\
\dv(u) = 0,
\end{gathered}
\label{eqn:nse}
\end{equation}
Here $\calA(\nabla u) = \nu (\nabla u + (\nabla u)^T)$ is the ``deviatoric''
part of the stress, $pI$ is the hydrostatic stress, and the ``Bernoulli''
stress $(1/2) u \otimes u$ arises from the identity
$$
(u.\nabla)u = (1/2)(\nabla u)u + (1/2) \dv(u \otimes u)
\quad \text{ when } \quad
\dv(u)=0.
$$
The classical formulation \cite{brfo91,GiRa79,Te77} is obtained by
eliminating $S$ from equations \eqnref{:nse}. Existence of solutions
to equations \eqnref{:nse} will be established with added regularity on the stress; in particular, $\dv(S) \in \Lfourthirds^d$ and this is only
possible if $f \in \Lfourthirds^d$. This additional regularity of the stress,
and corresponding restriction on the data, is typical of mixed methods
\cite{brfo91}.

The central issue in any mixed formulation is the set of compatibility
conditions between the spaces which are typically expressed as
inf--sup conditions.  In order to focus on these issues, and minimize
peripheral technical detail, we will only consider the stationary
problem with Dirichlet boundary data and the situation where $\calA:
\Re^{d \times d} \rightarrow \Re^{d \times d}_{sym}$ is linear.
However, the extension to include other boundary conditions, maximally
monotone stress strain relations (which model viscoelastic fluids),
and the evolution problem is direct.

The rest of this paper is organized as follows. The remainder of this
section reviews related results, and the following section develops
a variational formulation of equations \eqnref{:nse} and establishes
existence of solutions. In Section \ref{sec:feapprox} finite element
approximations are studied and standard error estimates are derived.
Finite element spaces satisfying the crucial inf--sup properties are
developed in Section \ref{sec:fespace} and a numerical example is
presented in Section \ref{sec:numerics}.

\subsection{Related Results}
Traditional numerical methods for computing approximate solutions of
fluid flows are based on the primitive velocity-pressure formulation,
and (accurate) approximations of the stress must be computed via
post-processing techniques such as $L^2$ projection or Superconvergent
Patch Recovery \cite{ba76,zh96,zitato71,zizh921,zizh922,zizh923}.  In
addition to the extra computational expense, these approximations of
the fluid stress may suffer from instabilities and may not be
appropriate for fluids with a complex microstructure, such as
shear-thinning or viscoelastic fluids.

Mixed and dual--mixed formulations that include a stress-like quantity
may be found in \cite{cawazh10,cawa09,fama96,fanipa08,fanipa09}.  
In this paper we address the following issues that have arisen
in this context.
\begin{enumerate}
\item Often non--physical quantities such as the non--symmetric
  ``pseudostress'' $\sigma = \nu\nabla u-pI$ are introduced as primary
  variables \cite{cawazh10,cawa09,fama96,fanipa08,fanipa09}.
\item The constitutive relation $\calA$ is often inverted
  \cite{cawazh10,cawa09,fama96,fanipa08,fanipa09}. Closed
  form expressions for the inverse may not be available for
  fluids exhibiting complex microstructure.
\item Often the mathematical structure of the Navier--Stokes equations is
  lost; for example, the skew symmetry of the nonlinear terms. This
  gives rise to a plethora of technical issues; examples include:
  \begin{enumerate}
  \item Often the Hilbert space setting needs to be abandoned
    \cite{fanipa08,fanipa09}.
  \item The classical energy estimate may not be available
    \cite{cawazh10,cawa09,fanipa08,fanipa09}.
  \item Elementary monotonicity arguments used for existence are not
    available and alternative arguments (e.g. BRR theory \cite{brrara80})
    are required \cite{cawazh10,fanipa08,fanipa09}.
  \end{enumerate}
  For the evolutionary problem these issues can preclude long time
  existence of solutions.
\end{enumerate}

The formulation presented below is unique in the sense
that (a) the trace-free velocity gradient is a primary unknown, (b)
the pressure is eliminated by proper definition of associated function
spaces and can be recovered by a simple postprocessing calculation,
(c) the underlying problem structure allows for nonlinear constitutive
laws which will be of critical importance when approximating flows of
non-Newtonian fluids, and (d) the skew-symmetrization of the nonlinear
convective term gives straightforward proofs of existence and
uniqueness results for the continuous and discrete variational
problems.  Additionally, the underlying structure of the
scheme is related to many finite element methods for linear elasticity
with weakly-imposed stress symmetry.

\section{Variational Formulation}\label{sec:wp}
Below $\Omega\subset\Re^d$, $d=2,3$, is a bounded domain
with Lipschitz boundary, and standard notation is used for the Lebesgue and
Sobolev spaces. The pairing $(f,g)$ denotes the standard $L^2(\Omega)$
inner product for scalar, vector, and tensor functions $f$ and $g$.

\begin{assumption} There exist constants $C, \nu > 0$ such that the
  constitutive relation $\calA:\Re^{d\times d}\to\Re^{d\times
    d}_{sym}$ satisfies
  \begin{enumerate}
  \item $\calA(G)=\calA(G^{sym})$, where $G^{sym} = (1/2)(G+G^\top)$,
  \item $(\calA(G),G) \ge \nu\ltwo{G^{sym}}^2$, and 
  \item $(\calA(G),H)\le C \ltwo{G}\ltwo{H}$.
  \end{enumerate}
\end{assumption}

The dual--mixed formulation of the Navier--Stokes equations will
be posed in the spaces:\footnote{These definitions correct the spaces given in {\tt http://dx.doi.org/10.1051/m2an/2012050}.}
\begin{gather}
  \bbG = \{ G \in \Ltwo^{d \times d} \sst tr(G) = 0 \}, \nonumber \\
  \bbU = \Lfour^d, \label{eqn:spaces} \\
  \bbS = \left\{ S\in \Ltwo^{d \times d} \sst \dv(S) \in \Lfourthirds^d
  \text{ and }  \int_\Omega tr(S) = 0\right\}.
  \nonumber
\end{gather} 
Writing $G=\nabla u$, the incompressibility condition $\dv(u)=0$ becomes
$tr(G)=0$; that is, $G\in\bbG$. The Navier--Stokes equations
may then be posed as: $(G,u,S)\in \bbG\times\bbU\times\bbS$,
\begin{align}
  (\calA(G),H) - (1/2)(u\otimes u, H) - (S, H) 
  & = 0, & H & \in \bbG \nonumber \\ 
  (1/2)(G u, v) - (\dv(S), v) 
  & = (f, v), & v & \in \bbU \label{eqn:dmNS} \\ 
  (G,T) + (u, \dv(T)) 
  & = 0, & T & \in \bbS. \nonumber
\end{align}
To illustrate that this weak statement has the same structure
as the usual formulation of the Navier--Stokes equations, we introduce the
following bilinear and (skew-symmetric) trilinear forms.
\begin{definition} \label{def:abc}
With the spaces defined as in \eqnref{:spaces}
\begin{enumerate}
\item $a:(\bbG \times \bbU)^2 \rightarrow \Re$,
  $$
  a((G,u), (H,v)) =(\calA(G),H).
  $$
\item $b:\bbS \times (\bbG \times \bbU) \rightarrow \Re$,
  $$  
  b(S,(H,v)) = (S,H) + (\dv(S),v).
  $$
  $\bbZ = Ker(B^T) = \{(G,u) \in \bbG \times \bbU \sst b(T, (G,u)) = 0,
  \,\, T \in \bbS\}$.  
\item $c: (\bbG \times \bbU)^3 \rightarrow \Re$,
  $$
  c((F,w), (G,u), (H,v))
  = (1/2) \left[(Gw,v) - ((u \otimes w),H)\right]
  = (1/2) \left[(Gw,v) - (Hw,u)\right].
  $$
\end{enumerate} 
\end{definition}
The dual--mixed formulation \eqnref{:dmNS} then takes the classical form:
$((G,u), S) \in (\bbG \times \bbU) \times \bbS$,
\begin{align*}
a((G,u), (H,v)) + c((G,u),(G,u),(H,v)) - b(S, (H,v)) &= F(H,v), 
& (H,v) &\in \bbG \times \bbU \\
b(T,(G,u)) &= 0,
&T &\in \bbS.
\end{align*}

\subsection{Well--Posedness} 
In this section it is shown that the classical analysis for the mixed
formulation of the Navier--Stokes equations extends to the dual--mixed
formulation \eqnref{:dmNS}.  The following lemma originates from
\cite{ardogu84} and is useful when testing the stress with trace free
functions $H \in \bbG$.

\begin{lemma} \label{lem:trace}
  Let $d=2$ or $3$ and $\Omega \subset \Re^d$ be a bounded Lipschitz
  domain.  If $S \in \bbS$, let $S_0 = S - (1/d) tr(S)
  I$ denote the trace-free part of $S$. Then
  $$
  \ltwo{tr(S)} 
  \leq C \left( \ltwo{S_0} + \hmone{\dv(S)} \right).
  $$
  In particular,
  $$
  \ltwo{S_0}^2 + \lfourthirds{\dv(S)}^2
  \leq \normS{S}^2
  \leq C ( \ltwo{S_0}^2 + \lfourthirds{\dv(S)}^2),
  $$
  where $\normS{S}^2 \equiv \ltwo{S}^2 + \lfourthirds{\dv(S)}^2$.
\end{lemma}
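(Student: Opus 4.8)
The plan is to reduce both assertions to the Ne\v{c}as (Lions) inequality---that on a bounded Lipschitz domain every mean-zero $q\in\Ltwo$ satisfies $\ltwo{q}\le C\,\hmone{\nabla q}$---combined with the pointwise orthogonal splitting $S = S_0 + (1/d)\,tr(S)\,I$ that defines $S_0$.

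First I would record the consequences of this splitting in the Frobenius inner product. Since $S_0 : I = tr(S_0) = 0$ pointwise, the two summands are orthogonal, and integrating over $\Omega$ gives
\[
\ltwo{S}^2 = \ltwo{S_0}^2 + (1/d)\,\ltwo{tr(S)}^2 .
\]
In particular $\ltwo{S}\ge\ltwo{S_0}$, which immediately yields the lower bound $\ltwo{S_0}^2 + \lfourthirds{\dv(S)}^2 \le \normS{S}^2$.

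Next, for the key trace estimate, I would apply $\dv$ to the splitting. Because $\dv(qI)=\nabla q$, one has $\nabla(tr(S)) = d\bigl(\dv(S)-\dv(S_0)\bigr)$. The divergence of the $\Ltwo$ tensor $S_0$ lies in $\Hmone$ with $\hmone{\dv(S_0)}\le\ltwo{S_0}$, as seen by integrating by parts against $\Honeo$ test functions. Since $\int_\Omega tr(S)=0$ by the definition of $\bbS$, the Ne\v{c}as inequality applies to $q=tr(S)$ and delivers
\[
\ltwo{tr(S)} \le C\,\hmone{\nabla(tr(S))} \le C\bigl(\hmone{\dv(S)} + \ltwo{S_0}\bigr),
\]
which is the first assertion.

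Finally, the upper bound in the norm equivalence follows by combining the above with the continuous embedding $\Lfourthirds\embed\Hmone$ (valid because $\Honeo\embed\Lfour$ for $d=2,3$), so that $\hmone{\dv(S)}\le C\,\lfourthirds{\dv(S)}$. Feeding this into the trace estimate and squaring bounds $\ltwo{tr(S)}^2$ by $C\bigl(\ltwo{S_0}^2 + \lfourthirds{\dv(S)}^2\bigr)$; substituting into the identity for $\ltwo{S}^2$ then gives $\normS{S}^2 \le C\bigl(\ltwo{S_0}^2 + \lfourthirds{\dv(S)}^2\bigr)$. The main obstacle is the Ne\v{c}as inequality itself, the only non-elementary ingredient, whose validity on a general bounded Lipschitz domain is exactly what makes the constraint $\int_\Omega tr(S)=0$ in the definition of $\bbS$ indispensable; the remaining steps are routine integration by parts, pointwise orthogonality, and Sobolev embedding.
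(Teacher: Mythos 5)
Your proof is correct. Note, however, that the paper itself gives no proof of this lemma: it is quoted from Arnold--Douglas--Gupta \cite{ardogu84}, so there is no in-paper argument to compare against; the relevant comparison is with that reference. Your route is the dual formulation of theirs. You apply the Ne\v{c}as inequality $\ltwo{q} \le C \hmone{\nabla q}$ (for mean-zero $q$) directly to $q = tr(S)$, after writing $\nabla(tr(S)) = d\left(\dv(S) - \dv(S_0)\right)$ and bounding $\hmone{\dv(S_0)} \le \ltwo{S_0}$ by integration by parts; Arnold--Douglas--Gupta instead invoke the equivalent surjectivity statement (for mean-zero $q \in \Ltwo$ there exists $w \in \Honeo^d$ with $\dv(w) = q$ and $\hone{w} \le C\ltwo{q}$), test $tr(S)$ against $\dv(w)$, and integrate by parts. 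These are interchangeable, since the Ne\v{c}as inequality and the existence of a bounded right inverse of the divergence are the two faces of the same closed-range fact on a bounded Lipschitz domain, so neither version buys anything substantive over the other. Your remaining steps are sound and complete the norm equivalence cleanly: the pointwise Frobenius orthogonality giving $\ltwo{S}^2 = \ltwo{S_0}^2 + (1/d)\ltwo{tr(S)}^2$ (hence the lower bound), and the embedding $\Lfourthirds \embed \Hmone$, dual to $\Honeo \embed \Lfour$, to convert the $\Hmone$ bound on $\dv(S)$ into the $\lfourthirds{\dv(S)}$ term; this last point is exactly where the restriction to $d = 2,3$ (more precisely $d \le 4$) enters, and you correctly flagged it.
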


In the current context the Korn and Poincar\'{e} inequalities
correspond to bounds upon $G^{skw}$ and $u$ by $G^{sym}$.

\begin{lemma} \label{lem:KornSobolev} Let the spaces $\bbG$, $\bbU$
  and $\bbS$ be the spaces characterized in equation \eqnref{:spaces},
  and let $\bbZ \subset \bbG \times \bbU$ be the kernel introduced in
  Definition \ref{def:abc}.
  \begin{enumerate}
  \item There exist constants $C$, $c > 0$ such that
    \begin{align*}
      \sup_{(G,u) \in \bbG \times \bbU} 
      \frac{(G,S) + (u, \dv(S))}{\norm{(G,u)}} &\geq c \normS{S}, 
      & S &\in \bbS \\[1ex]
      \norm{(\Gskew,u)} & \leq C \ltwo{\Gsym},
      & (G,u) &\in \bbZ.
    \end{align*}

  \item If $(G,u) \in \bbZ$ then $\lsix{u} \leq C \ltwo{G^{sym}}$;
    moreover, if $\{(G_n, u_n)\}_{n=0}^\infty \subset \bbZ$ and $G_n
    \weak G$ in $\Ltwo^{d \times d}$ then $u_n \rightarrow u$ in
    $\Lp^d$ for $1 \leq p < 6$.
  \end{enumerate}
\end{lemma}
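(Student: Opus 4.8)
The plan is to first identify the kernel $\bbZ$ explicitly, since every remaining estimate is then a standard consequence of Korn's and Sobolev's inequalities. I claim that $(G,u)\in\bbZ$ if and only if $u\in\Honeo^d$ with $\dv(u)=0$ and $G=\nabla u$. To prove this I would unwind the defining identity $(G,T)+(u,\dv(T))=0$ for all $T\in\bbS$. Testing against trace-free $T\in C_c^\infty(\Omega)^{d\times d}$ (which lie in $\bbS$) identifies $G$ with the trace-free part of the distributional gradient of $u$, i.e. $\nabla u=G+\lambda I$ for a scalar distribution $\lambda$; testing against $\phi I$ with $\phi\in C_c^\infty(\Omega)$ and $\int_\Omega\phi=0$ shows $\dv(u)$ is constant, which forces $\lambda$ to be that constant. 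Since $G\in\Ltwo^{d\times d}$, this gives $\nabla u\in\Ltwo^{d\times d}$, so $u\in\Hone^d$. Finally, integrating by parts against $T\in C^\infty(\Omegabar)^{d\times d}$ with $\int_\Omega tr(T)=0$ annihilates the interior contribution and leaves only the boundary pairing $\langle u,Tn\rangle_{\partial\Omega}$; since $Tn|_{\partial\Omega}$ ranges over a dense set of boundary fields (the interior constraint $\int_\Omega tr(T)=0$ can be met by a compactly supported correction that does not affect the boundary values), this forces $u=0$ on $\partial\Omega$, whence $u\in\Honeo^d$ and $\int_\Omega\dv(u)=0$ pins the constant to zero. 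I expect this characterization, in particular extracting the homogeneous boundary condition from the weak identity, to be the main obstacle.

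Granting the characterization, the inf--sup bound in part 1 follows by duality. Because $G$ is trace-free, $(G,S)=(G,S_0)$, so the numerator is $(G,S_0)+(u,\dv(S))$. Taking $u=0$ and $G=S_0\in\bbG$ shows the supremum is at least $\ltwo{S_0}$, while taking $G=0$ and invoking $\Lfour$--$\Lfourthirds$ duality shows it is at least $\lfourthirds{\dv(S)}$. Testing one component at a time in this way (so that the product norm in the denominator reduces to the single active factor norm) shows the supremum is at least $(1/2)(\ltwo{S_0}+\lfourthirds{\dv(S)})$, which by Lemma \ref{lem:trace} dominates $c\normS{S}$.

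The Korn--Poincar\'e bound $\norm{(\Gskew,u)}\le C\ltwo{\Gsym}$ then follows at once: writing $G=\nabla u$ with $u\in\Honeo^d$, Korn's first inequality gives $\ltwo{\nabla u}\le C\ltwo{\Gsym}$, hence $\ltwo{\Gskew}\le C\ltwo{\Gsym}$, and the Sobolev embedding $\Honeo\embed\Lfour$ (valid for $d=2,3$) together with Poincar\'e gives $\lfour{u}\le C\ltwo{\nabla u}\le C\ltwo{\Gsym}$. Summing the two estimates yields the claim.

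For part 2, the embedding $\Honeo^d\embed\Lsix^d$ combined with Korn gives $\lsix{u}\le C\ltwo{\nabla u}\le C\ltwo{\Gsym}$. For the compactness assertion, if $G_n\weak G$ in $\Ltwo^{d\times d}$ then $\{G_n\}$ is bounded, so $\{u_n\}$ is bounded in $\Honeo^d$ by Korn and Poincar\'e, and the Rellich--Kondrachov theorem supplies a subsequence converging strongly in $\Lp^d$ for $1\le p<6$. The weak $\Honeo$ limit $u$ of such a subsequence satisfies $\nabla u=G$ and (since $tr$ is weakly continuous) $\dv(u)=0$, so $(G,u)\in\bbZ$; moreover this $u$ is uniquely determined by $G$, since $\nabla(u_1-u_2)=0$ with $u_1-u_2\in\Honeo^d$ forces $u_1=u_2$. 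Because every subsequence of $\{u_n\}$ thus has a further subsequence converging in $\Lp^d$ to this same $u$, the full sequence converges, completing the argument.
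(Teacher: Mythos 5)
Your proposal is correct and follows essentially the same route as the paper: the paper's own (two-sentence) proof likewise rests on the characterization $\bbZ = \{(\nabla u, u) \sst u \in \Honeo^d,\ \dv(u)=0\}$ followed by Korn, Poincar\'e, Sobolev and Rellich--Kondrachov, and obtains the inf--sup bound from Lemma \ref{lem:trace} by testing with $S_0$ and $\dv(S)$. The only differences are that you actually prove the kernel characterization the paper merely asserts, and your component-wise choice of test functions with the $L^4$--$L^{4/3}$ duality element in the velocity slot is the correct reading of the paper's formal choice $(G,u)=(S_0,\dv(S))$, whose second component need not lie in $\bbU=\Lfour^d$.
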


The inf-sup condition follows directly upon selecting $(G,u) = (S_0,
\dv(S))$ and appealing to the previous lemma.  If $(G,u) \in \bbZ$,
then $u \in \Honeo$ with $G = \nabla u$ and $\dv(u) = 0$ so the
second assertion follows from the Korn and Poincar\'{e} inequalities
and the Sobolev embedding theorem.

\begin{corollary} 
  Let $a(.,.)$, $b(.,.)$ and $c(.,.,.)$ be the functions and $\bbZ$ be
  the kernel introduced in Definition \ref{def:abc}.
  \begin{enumerate}
  \item $a(.,.)$ and $b(.,.)$ are continuous and $a(.,.)$
    is coercive on $\bbZ$;
    $$
    a((G,u), (G,u)) \geq \nu \ltwo{G^{sym}}^2 
    \geq c_a \norm{(G,u)}^2
    \qquad
    (G,u) \in \bbZ,
    $$
    where $c_a=\nu/(1+C^2)$.
  \item $c:\bbZ \times \bbZ \rightarrow (\bbG \times \bbU)'$ is weakly
    continuous.
  \end{enumerate}
\end{corollary}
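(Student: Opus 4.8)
The plan is to verify the three assertions in turn, each reducing to an elementary estimate once the appropriate earlier result is invoked. Continuity of $a$ is immediate from part~(3) of the Assumption: $|a((G,u),(H,v))| = |(\calA(G),H)| \le C\ltwo{G}\ltwo{H} \le C\norm{(G,u)}\norm{(H,v)}$. For $b$ I would estimate the two terms separately, bounding $|(S,H)|\le\ltwo{S}\ltwo{H}$ by Cauchy--Schwarz and $|(\dv(S),v)|\le\lfourthirds{\dv(S)}\lfour{v}$ by H\"older's inequality, the exponents $4/3$ and $4$ being conjugate. Summing and applying Cauchy--Schwarz to the resulting pair of products gives $|b(S,(H,v))| \le \normS{S}\,\norm{(H,v)}$, where $\normS{S}^2 = \ltwo{S}^2 + \lfourthirds{\dv(S)}^2$; this is precisely continuity.

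For coercivity of $a$ on $\bbZ$, the first inequality $a((G,u),(G,u)) = (\calA(G),G)\ge\nu\ltwo{\Gsym}^2$ is part~(2) of the Assumption. To convert control of $\Gsym$ into control of the full norm I would use the orthogonal decomposition $\ltwo{G}^2 = \ltwo{\Gsym}^2 + \ltwo{\Gskew}^2$ together with the Korn--Poincar\'e bound $\norm{(\Gskew,u)}\le C\ltwo{\Gsym}$ from Lemma~\ref{lem:KornSobolev}(1), valid since $(G,u)\in\bbZ$. These give $\norm{(G,u)}^2 = \ltwo{\Gsym}^2 + \norm{(\Gskew,u)}^2 \le (1+C^2)\ltwo{\Gsym}^2$, whence $\nu\ltwo{\Gsym}^2 \ge c_a\norm{(G,u)}^2$ with $c_a = \nu/(1+C^2)$.

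The substantive claim is the weak continuity of $c$, and here everything rests on the compactness recorded in Lemma~\ref{lem:KornSobolev}(2). I would take sequences $(F_n,w_n)\weak(F,w)$ and $(G_n,u_n)\weak(G,u)$ in $\bbZ$ and show that $c((F_n,w_n),(G_n,u_n),(H,v))\to c((F,w),(G,u),(H,v))$ for each fixed $(H,v)\in\bbG\times\bbU$, i.e. weak-$*$ convergence of the functionals. Since $F_n\weak F$ and $G_n\weak G$ in $\Ltwo^{d\times d}$ with the sequences lying in $\bbZ$, Lemma~\ref{lem:KornSobolev}(2) upgrades the weak convergence of the velocity components to \emph{strong} convergence $w_n\to w$ and $u_n\to u$ in $\Lfour^d$. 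Writing $c((F_n,w_n),(G_n,u_n),(H,v)) = (1/2)[(G_nw_n,v)-(Hw_n,u_n)]$, I would pass to the limit term by term: in $(G_nw_n,v)=(G_n,\,v\otimes w_n)$ the factor $v\otimes w_n\to v\otimes w$ strongly in $\Ltwo^{d\times d}$ (the fixed $L^4$ function $v$ paired with the strongly convergent $w_n$) while $G_n\weak G$, so the weak--strong pairing converges; in $(Hw_n,u_n)$ both $Hw_n\to Hw$ in $\Lfourthirds^d$ and $u_n\to u$ in $\Lfour^d$ converge strongly, so their pairing converges as well.

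The one place where care is genuinely needed---and where I expect the main obstacle to lie---is precisely this use of compactness: neither term is continuous in the full weak topology by itself, since a product of two merely weakly convergent factors need not converge, so the argument hinges entirely on Lemma~\ref{lem:KornSobolev}(2) turning weak convergence of the tensor variable into strong convergence of the velocity. I would also flag that one cannot expect convergence of $c$ \emph{uniformly} over test functions (strong convergence in $(\bbG\times\bbU)'$), because the residual term $(G_n-G)w$ converges only weakly in $\Lfourthirds^d$; weak-$*$ convergence of the functionals is the correct and sufficient notion for the existence theory that follows.
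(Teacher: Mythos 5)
Your proposal is correct and follows exactly the route the paper intends (the corollary is stated there without an explicit proof, as a direct consequence of earlier results): continuity of $a$ and $b$ from the Assumption on $\calA$ together with H\"older's inequality, coercivity with $c_a=\nu/(1+C^2)$ from part (2) of the Assumption combined with the bound $\norm{(\Gskew,u)}\le C\ltwo{\Gsym}$ of Lemma \ref{lem:KornSobolev}(1), and weak continuity of $c$ from the compactness assertion of Lemma \ref{lem:KornSobolev}(2), which upgrades weak convergence in $\bbZ$ to strong $\Lfour^d$ convergence of the velocity components. Your term-by-term limit passage (weak--strong pairing for $(G_n w_n, v)$, strong--strong for $(H w_n, u_n)$) is the standard and intended argument, so there is nothing to correct.
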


The following theorem establishes existence for the continuous
problem \eqnref{:dmNS}, and will also provide existence of 
solutions for the numerical scheme. 

\begin{theorem} \label{thm:existence}
  Let $\bbG$, $\bbU$ and $\bbS$ be separable reflexive Banach spaces,
  $a:(\bbG \times \bbU)^2 \rightarrow \Re$ and $b:\bbS \times (\bbG
  \times \bbU) \rightarrow \Re$ be bilinear and continuous. Let
  $$
  \bbZ = \{(G,u) \in \bbG \times \bbU \sst b(T,(G,u)) = 0,
  \,\, T \in \bbS\},
  $$ 
  and $c:\bbZ^2 \times (\bbG \times \bbU) \rightarrow \Re$ be
  trilinear and continuous.  Assume
  \begin{enumerate}
  \item $a(.,.)$ is coercive on $\bbZ$:
    $
    a((G,u), (G,u)) \geq c_a \norm{(G,u)}^2
    $
    for all $(G,u) \in \bbZ$.

  \item $b(.,.)$ satisfies the inf-sup condition
    $$
    \sup_{(G,u) \in \bbG \times \bbU} 
    \frac{b(S, (G,u))}{\norm{(G,u)}} \geq c_b \norm{S}_\bbS, 
    \qquad
    S \in \bbS.
    $$

  \item $c((G,u), (G,u), (G,u)) = 0$ for all $(G,u) \in \bbZ$,
    and the map $c:\bbZ \times \bbZ \rightarrow (\bbG \times \bbU)'$
    is weakly continuous.
  \end{enumerate}
  Then for each $F \in (\bbG \times \bbU)'$ there exists
  $(G,u,S) \in \bbG \times \bbU \times \bbS$ such that
  \begin{align*}
    a((G,u), (H,v)) + c((G,u), (G,u), (H,v)) - b(S, (H,v)) 
    &= F(H,v),
    & (H,v) & \in \bbG \times \bbU, \\
    b(T, (G,u)) &=0
    & T & \in \bbS.
  \end{align*}
  Moreover, $\norm{(G,u)} \leq (1/c_a) \norm{F}$ and $\norm{S}_\bbS \leq
  (1/c_b) \left(1 + C_a/c_a + (C_c/c_a^2) \norm{F} \vph\right) \norm{F}$
  where $C_a$ and $C_c$ denote continuity constants of
  $a(.,.)$ and $c(.,.,.)$ respectively.
\end{theorem}

\begin{proof}
  Let $\calF:\bbZ \rightarrow \bbZ$ be characterized by
  $$
  (\calF(G, u), (H, v))
   = a((G,u), (H,v)) + c((G,u), (G,u), (H,v)) - F(H,v),
   \qquad (H,v) \in \bbZ,
  $$
  where the pairing on the left is an inner product on $\bbG \times
  \bbU$ (for example, the symmetric part of $a(.,.)$). Setting $(H,v)
  = (G,u) \in \bbZ$ shows
 $$
 (\calF(G, u), (G, u))
 = a((G,u), (G,u)) - F(G,u) 
 \geq c_a \norm{(G,u)}^2 - F(G,u),
 $$
 It follows \cite[Corollary II.2.2]{Sh97} that $\calF(G,u) = 0$ for some
 $(G, u) \in \bbZ$ with norm $\norm{(G,u)} \leq (1/c_a) \norm{F}$.

 Existence of a stress follows from the continuity and coercivity of
 $b(.,.)$ on $\bbS \times (\bbG \times \bbU)/\bbZ$. Specifically,
 if $(G,u) \in \bbZ$ satisfies $\calF(G,u) = 0$, then the mapping
 $$
 (H,v) \mapsto a((G,u), (H,v)) + c((G,u), (G,u), (H,v)) - F(H,v)
 $$
 vanishes on $\bbZ$, so is in the dual of $(\bbG \times \bbU)/\bbZ$.
 It follows that the problem; $S \in \bbS$,
 $$
 b(S, (H,v)) = a((G,u), (H,v)) + c((G,u), (G,u), (H,v)) - F(H,v),
 \qquad (H,v) \in \bbG \times \bbU,
 $$
 has a unique solution; moreover
 $$
 c_b \norm{S}_\bbS 
 \leq C_a \norm{(G,u)} + C_c \norm{(G,u)}^2 + \norm{F}
 \leq (1 + C_a/c_a + C_c/c_a^2 \norm{F}) \norm{F}.
 $$
\end{proof}

\section{Finite Element Approximation}\label{sec:feapprox}

Let $\bbG \times \bbU \times \bbS$ be the spaces defined in
\eqnref{:spaces} and $\bbG_h \times \bbU_h \times \bbS_h$ be (finite
element) subspaces.  The discrete problem corresponding to the
variational form \eqnref{:dmNS} is: $(G_h, u_h, S_h) \in
\bbG_h \times \bbU_h \times \bbS_h$,
\begin{align}
  (A(G_h),H_h) - (1/2)(u_h \otimes u_h, H_h) - (S_h, H_h) 
  & = 0, & H_h & \in \bbG_h \nonumber \\ 
  (1/2)(G_h u_h, v_h) - (\dv(S_h), v_h) 
  & = (f, v_h), & v_h & \in \bbU_h \label{eqn:dmNSh} \\ 
  (G_h,T_h) + (u_h, \dv(T_h)) 
  & = 0, & T_h & \in \bbS_h. \nonumber
\end{align}
Using the functions $a(.,.)$, $b(.,.)$ and $c(.,.,.)$ in Definition
\ref{def:abc} the discrete weak problem can be written as: $(G_h, u_h,
S_h) \in \bbG_h \times \bbU_h \times \bbS_h$,
\begin{align*}
  a((G_h,u_h), (H_h,v_h)) + c((G_h,u_h), (G_h,u_h), (H_h,v_h))
  + b(S_h, (H_h,v_h)) 
  &= F(H_h,v_h), \\
  b(T_h, (G_h,u_h)) &=0,
\end{align*}
for $(H_h,v_h) \in \bbG_h \times \bbU_h$ and $T_h \in \bbS_h.$

In order for the discrete problem to be well--posed the discrete spaces
need to inherit the inf-sup and Korn/Poincar\'{e} estimates stated in Lemma
\ref{lem:KornSobolev}.

\begin{assumption} \label{ass:infsup}
There exist constants $c_b$ and $C > 0$ independent of $h$ such that
\begin{align}
  \sup_{(G_h,u_h) \in \bbG_h \times \bbU_h} 
  \frac{(G_h,S_h) + (u_h, \dv(S_h))}{\norm{(G_h,u_h)}} 
  &\geq c_b \normS{S_h}, 
  & S_h &\in \bbS_h, \label{eqn:dinfsup}\\[1ex]
  \norm{(\Gskew_h,u_h)} & \leq C \ltwo{\Gsym}
  & (G_h,u_h) &\in \bbZ_h.\label{eqn:dkorn}
\end{align}
where
$
\bbZ_h = \{ (G_h, u_h) \in \bbG_h \times \bbU_h \sst
(G_h, S_h) + (u_h, \dv(S_h)) = 0, \,\,\, S_h \in \bbS_h\}.
$
\end{assumption}

\subsection{Discrete Weak Problem} 
Existence of a solution to the discrete problem will follow from
Theorem \ref{thm:existence} whenever the discrete spaces inherit the
inf-sup condition \eqnref{:dinfsup} and discrete Korn inequality
\eqnref{:dkorn} since weak continuity of the trilinear form $c(.,.,.)$ is
immediate on finite dimensional spaces.

\begin{lemma} 
  Let $(\bbG_h, \bbU_h, \bbS_h) \subset (\bbG, \bbU, \bbS)$ be a
  finite dimensional subspace satisfying Assumption \ref{ass:infsup}.
  Then there exists $(G_h, u_h, S_h) \in (\bbG_h, \bbU_h, \bbS_h)$
  satisfying equations \eqnref{:dmNSh} such that
  $$
  \ltwo{G_h} + \lfour{u_h} \leq (1/c_a) \lfourthirds{f}.
  $$
  and
  $$
  \normS{S_h} \leq (1/c_b) 
  \left(1 + C_a/c_a + (C_c/c_a^2) \lfourthirds{f} \vph\right) \lfourthirds{f},
  $$
  where $C_c$ is the norm of $c(.,.,.)$ on $\bbZ \times
  \bbZ\times (\bbG \times \bbU)$.
\end{lemma}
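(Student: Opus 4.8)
The plan is to apply Theorem \ref{thm:existence} verbatim, taking the finite element subspaces $\bbG_h$, $\bbU_h$, $\bbS_h$ in the roles of the abstract spaces $\bbG$, $\bbU$, $\bbS$, with $\bbZ_h$ from Assumption \ref{ass:infsup} playing the role of $\bbZ$ and the load functional $F(H_h,v_h)=(f,v_h)$. Since the discrete spaces are finite dimensional (hence closed) subspaces of the separable reflexive spaces \eqnref{:spaces}, and since $a(.,.)$, $b(.,.)$, $c(.,.,.)$ of Definition \ref{def:abc} restrict to continuous forms on them, the only real work is to verify the three structural hypotheses of the theorem at the discrete level. The asserted bounds will then follow by reading off the conclusion of the theorem and identifying the dual norm of $F$.

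Two of the three hypotheses come essentially for free. The discrete inf--sup condition for $b(.,.)$ is exactly \eqnref{:dinfsup}, with the $h$--independent constant $c_b$. The cubic form vanishes on the diagonal everywhere, not merely on $\bbZ_h$: directly from Definition \ref{def:abc},
$$
c((G_h,u_h),(G_h,u_h),(G_h,u_h)) = (1/2)\left[(G_hu_h,u_h)-(G_hu_h,u_h)\right]=0,
$$
so the algebraic skew--symmetry is inherited with no compatibility requirement on the spaces. Finally, weak continuity of $c:\bbZ_h\times\bbZ_h\to(\bbG_h\times\bbU_h)'$ is automatic, because on a finite dimensional space weak convergence coincides with norm convergence while $c(.,.,.)$ is continuous; this is the observation recorded in the paragraph preceding the lemma.

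The hypothesis requiring care, and the one I expect to be the main obstacle, is coercivity of $a(.,.)$ on the discrete kernel $\bbZ_h$. The difficulty is that $\bbZ_h\not\subset\bbZ$ in general: a pair $(G_h,u_h)\in\bbZ_h$ satisfies the orthogonality $(G_h,T_h)+(u_h,\dv(T_h))=0$ only against $T_h\in\bbS_h$, so $G_h$ need not be the gradient of $u_h$ and $u_h$ need not be divergence free. Consequently the genuine Korn and Poincar\'e inequalities that were available on the continuous kernel in Lemma \ref{lem:KornSobolev} cannot be used, and this is precisely why the discrete Korn inequality \eqnref{:dkorn} is imposed in Assumption \ref{ass:infsup}. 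Granting it, I would repeat the coercivity computation at the discrete level: the coercivity hypothesis on $\calA$ gives
$$
a((G_h,u_h),(G_h,u_h)) = (\calA(G_h),G_h) \ge \nu\,\ltwo{G_h^{sym}}^2,
$$
while \eqnref{:dkorn} bounds $\norm{(G_h^{skw},u_h)}\le C\ltwo{G_h^{sym}}$, whence $\norm{(G_h,u_h)}^2\le(1+C^2)\ltwo{G_h^{sym}}^2$ and coercivity holds with the same constant $c_a=\nu/(1+C^2)$, independent of $h$ since $C$ in \eqnref{:dkorn} is.

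With all three hypotheses in hand, Theorem \ref{thm:existence} produces $(G_h,u_h,S_h)\in\bbG_h\times\bbU_h\times\bbS_h$ solving \eqnref{:dmNSh} together with $\norm{(G_h,u_h)}\le(1/c_a)\norm{F}$ and $\normS{S_h}\le(1/c_b)(1+C_a/c_a+(C_c/c_a^2)\norm{F})\norm{F}$. It then remains only to identify the dual norm of the load: because $F(H_h,v_h)=(f,v_h)$ depends on $v_h\in\bbU_h\subset\Lfour^d$ alone, duality between $\Lfour$ and $\Lfourthirds$ together with $\norm{(H_h,v_h)}\ge\lfour{v_h}$ gives $\norm{F}\le\lfourthirds{f}$. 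Substituting this into the two displayed bounds, and recalling that the norm on $\bbG\times\bbU$ controls $\ltwo{G_h}+\lfour{u_h}$, yields exactly the asserted estimates.
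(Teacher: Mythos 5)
Your proposal is correct and is exactly the argument the paper intends: the paper gives no separate proof of this lemma, relying instead on the preceding paragraph, which states that existence follows from Theorem \ref{thm:existence} once the discrete spaces inherit the inf--sup condition \eqnref{:dinfsup} and the discrete Korn inequality \eqnref{:dkorn}, with weak continuity of $c(.,.,.)$ immediate in finite dimensions. Your write-up fills in precisely those verifications (coercivity on $\bbZ_h$ with $c_a=\nu/(1+C^2)$, algebraic skew-symmetry of $c$, and the identification $\norm{F}\le\lfourthirds{f}$ via $L^4$--$L^{4/3}$ duality), so it matches the paper's approach.
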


\subsection{Error Estimates} 
The following analogue of Lemma \ref{lem:KornSobolev} establishes the
compactness and embedding properties of the discrete spaces necessary
to control the trilinear form $c(.,.,.)$.
\begin{lemma} \label{lem:bbZh}
  Let $\{(\bbG_h, \bbU_h, \bbS_h)\}_{h > 0}$ be a family of finite
  element subspaces of $(\bbG, \bbU, \bbS)$ constructed over a regular
  family of triangulations of $\Omega$ satisfying the hypotheses of
  Assumption \ref{ass:infsup}. 
  \begin{enumerate}
  \item If $(G_h, u_h) \in \bbZ_h$ and $G_h \weak G$ in $\Ltwo$,
    then $u_h \rightarrow u$ in $\Lfour$.
    
  \item If the triangulations are quasi-uniform there exists $C$
    independent of $h$ such that $\lsix{u_h} \leq C \ltwo{G_h^{sym}}$
    for $(G_h, u_h) \in \bbZ_h$.
  \end{enumerate}
\end{lemma}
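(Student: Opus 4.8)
The plan is to mirror the proof of Lemma \ref{lem:KornSobolev}(2), the essential difficulty being that $\bbZ_h \not\subset \bbZ$: a discrete pair $(G_h,u_h)$ need not satisfy $G_h = \nabla u_h$ with $u_h \in \Honeo$, so neither the Sobolev embedding nor Rellich compactness may be applied directly to $u_h$. I would recover both assertions by duality, transferring the needed regularity onto an auxiliary stress. Throughout, write $P_h$ for the $L^2$--orthogonal projection onto $\bbU_h$ and use the Fortin operator $\Pi_h$ of Section \ref{sec:fespace}, which has the commuting property $\dv(\Pi_h S) = P_h \dv(S)$ and $\dv(\bbS_h)\subset\bbU_h$ (these are in any case required for \eqnref{:dinfsup}). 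Only $d=3$ is treated, $d=2$ being strictly easier.

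For assertion (1), the discrete Korn inequality \eqnref{:dkorn} gives $\ltwo{G_h^{skw}} + \lfour{u_h}\le C\ltwo{G_h^{sym}}\le C$, so $u_h$ is bounded in $\Lfour^d$; pass to a subsequence with $u_h\weak u$ in $\Ltwo^d$. Testing the kernel relation with $S_h=\Pi_h S$ for $S$ in a dense smooth subset of $\bbS$, and using $\Pi_h S\to S$ in $\Ltwo$ together with $P_h\dv(S)\to\dv(S)$ in $\Ltwo$, identifies the limit as $(G,S)+(u,\dv S)=0$ for all $S\in\bbS$, i.e. $(G,u)\in\bbZ$; hence $u\in\Honeo$, $G=\nabla u$, $\dv u=0$ as in Lemma \ref{lem:KornSobolev}. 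To upgrade to strong convergence I write $\lfour{u_h-u}=\sup\{(u_h-u,g):\lfourthirds{g}\le 1\}$. For such $g$, let $z$ solve $\Delta z=g$ with $z=0$ on $\partial\Omega$, so that $S:=\nabla z$ satisfies $\dv S=g$, $\int_\Omega tr(S)=0$, and $\norm{S}_{W^{1,4/3}}\le C\lfourthirds{g}$. Since $u\in\Honeo$, one integration by parts gives $(u,g)=-(G,S)$, while $u_h\in\bbU_h$ and the commuting property give $(u_h,g)=(u_h,\dv\Pi_h S)=-(G_h,\Pi_h S)$. Subtracting,
$$
(u_h-u,g) = -(G_h,\Pi_h S - S) - (G_h-G,S).
$$

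The two terms are controlled separately, and the key structural point is that as $g$ ranges over the unit ball of $\Lfourthirds^d$ the corresponding stresses $S$ range over a set bounded in $W^{1,4/3}$, hence precompact in $\Ltwo$ by Rellich (for $d=3$, $W^{1,4/3}\embed\!\!\!\!\rightarrow\Ltwo$). Consequently $\sup_g|(G_h-G,S)|\to 0$, because $G_h-G\weak 0$ in $\Ltwo$ and weak convergence is uniform on compact sets. For the first term, $|(G_h,\Pi_h S-S)|\le C\ltwo{\Pi_h S-S}$, so it suffices that the interpolation error be uniformly small over this bounded family in $W^{1,4/3}$. This is the main obstacle of part (1): at the borderline regularity $W^{1,4/3}(\Omega)$ with $d=3$ the face degrees of freedom of $\Pi_h$ are only just defined, so a uniform estimate $\ltwo{\Pi_h S-S}\le C h^\alpha\norm{S}_{W^{1,4/3}}$ with $\alpha>0$ must be argued with care; granting it, both terms vanish uniformly in $g$ and $u_h\to u$ in $\Lfour^d$.

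For assertion (2), by \eqnref{:dkorn} it is enough to prove $\lsix{u_h}\le C\ltwo{G_h}$ with $C$ independent of $h$, and I again argue by duality: $\lsix{u_h}=\sup\{(u_h,g):\norm{g}_{L^{6/5}}\le 1\}$. For such $g$ solve $\Delta z=g$, $z=0$ on $\partial\Omega$, and set $S=\nabla z$, so $\dv S=g$, $\int_\Omega tr(S)=0$, and $\norm{S}_{W^{1,6/5}}\le C\norm{g}_{L^{6/5}}$; the critical Sobolev embedding $W^{1,6/5}(\Omega)\embed\Ltwo$ (the exponents agreeing exactly when $d=3$) yields $\ltwo{S}\le C\norm{g}_{L^{6/5}}$. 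Exactly as before, $(u_h,g)=(u_h,\dv\Pi_h S)=-(G_h,\Pi_h S)$, so $|(u_h,g)|\le\ltwo{G_h}\,\ltwo{\Pi_h S}$, and it remains to bound $\ltwo{\Pi_h S}\le C\norm{g}_{L^{6/5}}$ uniformly in $h$. This uniform $L^2$--stability of the stress interpolant at the borderline regularity $S\in W^{1,6/5}$ (again $d=3$) is precisely where quasi-uniformity enters and is the main obstacle of part (2): because $S$ barely fails to lie in any space better than $\Ltwo$, the usual scaling bound for $\Pi_h$ is unavailable, and on a quasi-uniform family one instead controls $\Pi_h S$ through a global inverse inequality whose constant is $h$--independent precisely because all elements have comparable diameter. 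Combining $\ltwo{\Pi_h S}\le C\ltwo{S}\le C\norm{g}_{L^{6/5}}$ with the duality bound gives $\lsix{u_h}\le C\ltwo{G_h}\le C\ltwo{G_h^{sym}}$, completing the proof.
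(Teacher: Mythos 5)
Your architecture (dualize the $L^4$ and $L^6$ norms, build an auxiliary stress with prescribed divergence, transfer to the discrete kernel via a commuting interpolant) is genuinely different from the paper's argument, and it could be made to work --- but as written the proof defers exactly the two estimates that carry the mathematical content, and in part (2) the mechanism you offer is wrong. For part (1) you literally write ``granting it'' for the uniform bound $\ltwo{\Pi_h S - S}\le C h^{\alpha}\norm{S}_{W^{1,4/3}}$. For part (2), a ``global inverse inequality'' cannot deliver $\ltwo{\Pi_h S}\le C\norm{S}_{W^{1,6/5}}$: inverse inequalities compare two norms of a function already known to be discrete, so to invoke one you would first need a priori control of $\Pi_h S$ in \emph{some} norm, which is precisely what is missing (the Raviart--Thomas interpolant is not $L^p$-stable, so there is no free starting norm; trying $\ltwo{\Pi_h S}\le Ch^{-1}\norm{\Pi_h S}_{L^{6/5}}$ leaves you with the $O(1)$ term $\norm{S}_{L^{6/5}}$ multiplied by $h^{-1}$). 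The true proof of both borderline estimates is an element-wise Bramble--Hilbert/scaling argument: the degrees of freedom of $\Pi_K$ are bounded on $W^{1,p}(K)$ for $p>1$ via traces, $\Pi_K$ reproduces constants, and scaling gives $\norm{S-\Pi_K S}_{L^2(K)}\le C h_K^{\,3/2-3/p+1}\,|S|_{W^{1,p}(K)}$, i.e. exponent $1/4$ for $p=4/3$ and exactly $0$ for the critical $p=6/5$; summing uses $\ell^{p}\subset\ell^2$. Note the constant depends only on shape regularity, so quasi-uniformity is not what rescues part (2) --- your attribution of that hypothesis is also off target (in the paper it enters elsewhere, legitimately, through an inverse estimate applied to a discrete function).

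There are also hypotheses you use that Assumption \ref{ass:infsup} does not supply: a Fortin operator with the commuting property $\dv(\Pi_h S)=P_h\dv(S)$ and the inclusion $\dv(\bbS_h)\subset\bbU_h$ (true for the elements of Section \ref{sec:fespace}, but the lemma is stated abstractly); membership $\Pi_h S\in\bbS_h$, since interpolation does not preserve $\int_\Omega tr(\cdot)=0$ (repairable by subtracting a multiple of $I$, which is invisible to the trace-free $G_h$); and $W^{2,p}$ elliptic regularity for the Dirichlet Laplacian on a Lipschitz domain at $p=4/3,\,6/5$, which is not guaranteed (avoidable by extending $g$ by zero, solving on a ball containing $\Omega$, and correcting the trace constant). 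The paper's proof sidesteps all of this: it poses the continuous dual-mixed Stokes-type auxiliary problem with datum $G_h$, observes that $(G_h,u_h,0)$ is exactly its discrete solution, so the standard mixed-method error estimate (needing only Assumption \ref{ass:infsup} and generic approximation properties) gives $\ltwo{\ut-u_h}\le Ch\ltwo{G_h}$; the $L^6$ bound then follows from Cl\'ement interpolation plus the inverse estimate $\lsix{\cdot}\le Ch^{-1}\ltwo{\cdot}$ on $\bbU_h$ (this is where quasi-uniformity genuinely enters), and part (1) follows from Rellich compactness applied to the auxiliary solution $\ut$. Your route, if you supplied the two scaling estimates, would have the interesting feature of proving part (2) under shape regularity alone; but as submitted, both parts stop at their crux.
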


\begin{proof}
  Fix $(G_h,u_h) \in \bbZ_h$ and
  let $(\Gt,\ut,\St) \in \bbG \times \bbU \times \bbS$ satisfy
  \begin{align*}
    (\Gt,H) - (v,\dv(\St)) - (\St,H)  
    &= (G_h, H) & (H,v) & \in \bbG \times \bbU \\
    (\ut,\dv(T)) + (\Gt,T) &= 0 & T & \in \bbS.
  \end{align*}
  Then $\ut \in \Honeo$, $\nabla \ut = \Gt$, and $\ltwo{\Gt} \leq
  \ltwo{G_h}$.  The Poincar\'{e} inequality and the Sobolev embedding theorem (in
  three dimensions) then show $\lsix{\ut} \leq C \ltwo{G_h}$. Notice
  that $(G_h,u_h,0)$ satisfies the discrete version of this equation,
  so classical finite element theory shows
  $$
  \ltwo{\ut-u_h} \leq C \ltwo{\nabla \ut} h \leq C \ltwo{G_h} h.
  $$
  If $I_h: \Hone \rightarrow \bbU_h$ denotes the Cl{\'e}ment 
  interpolant \cite{cl75}, the bound on $\lsix{u_h}$ follows from classical
  inverse estimates and approximation properties of $I_h$
  $$
  \lsix{u_h} 
  \leq \lsix{I_h \ut} + \lsix{I_h \ut - u_h} 
  \leq C \hone{I_h \ut} + (1/h) \ltwo{I_h \ut - u_h}
  \leq C \hone{\ut}.
  $$
\end{proof}

As with the Navier--Stokes equations \cite{la08}, solutions are unique
when $f$ is sufficiently small, and the discrete problem exhibits
optimal rates of convergence.

\begin{theorem} Let $\Omega\subset\Re^d$ have Lipschitz continuous
  boundary and let $\{(\bbG_h, \bbU_h, \bbS_h)\}_{h > 0}$ be a family
  of finite element subspaces of $(\bbG, \bbU, \bbS)$ constructed over
  a regular family of quasi-uniform triangulations of $\Omega$
  satisfying the hypotheses of Assumption \ref{ass:infsup}.  Assume
  $(G,u,S)\in \bbG \times \bbU \times \bbS$ satisfies equations
  \eqnref{:dmNS} and $(G_h,u_h,S_h)\in \bbG_h\times \bbU_h\times
  \bbS_h$ satisfies equations \eqnref{:dmNSh}.  If $\lfourthirds{f}$
  is sufficiently small there is a constant $C$, independent of $h$,
  such that
  \begin{multline*}
    \ltwo{G-G_h}+\lfour{u-u_h}+\normS{S-S_h} \\
    \le C \left\{\inf_{H_h\in\bbG_h}\ltwo{G-H_h}
      + \inf_{v_h\in U_h}\lfour{u-v_h}
      + \inf_{T_h\in\bbS_h}\normS{S-T_h} \right\}.
  \end{multline*}
\end{theorem}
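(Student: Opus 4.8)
The plan is to treat \eqnref{:dmNSh} as a conforming discretization of the perturbed saddle--point problem \eqnref{:dmNS} and to run the classical Girault--Raviart / Brezzi--Rappaz--Raviart argument, but with kernel coercivity, the inf--sup condition, and the skew--symmetry of the nonlinearity supplied by Lemma \ref{lem:KornSobolev}, Assumption \ref{ass:infsup}, and Definition \ref{def:abc}. First I would record Galerkin orthogonality: since $\bbG_h\times\bbU_h\times\bbS_h\subset\bbG\times\bbU\times\bbS$, subtracting \eqnref{:dmNSh} from \eqnref{:dmNS} tested against discrete functions gives, for all $(H_h,v_h)\in\bbG_h\times\bbU_h$ and $T_h\in\bbS_h$,
\begin{align*}
  a((G-G_h,u-u_h),(H_h,v_h)) + \Delta c(H_h,v_h) - b(S-S_h,(H_h,v_h)) &= 0,\\
  b(T_h,(G-G_h,u-u_h)) &= 0,
\end{align*}
where $\Delta c(H_h,v_h)=c((G,u),(G,u),(H_h,v_h))-c((G_h,u_h),(G_h,u_h),(H_h,v_h))$.

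Next I would split the error through a kernel interpolant. The discrete inf--sup condition \eqnref{:dinfsup} implies, by the standard Fortin argument, that the best approximation of $(G,u)\in\bbZ$ from the discrete kernel $\bbZ_h$ is comparable to the best approximation from all of $\bbG_h\times\bbU_h$; fix such $(\hat{G}_h,\hat{u}_h)\in\bbZ_h$ and set $\eta=(G,u)-(\hat{G}_h,\hat{u}_h)$ and $\chi=(\hat{G}_h,\hat{u}_h)-(G_h,u_h)\in\bbZ_h$. Testing the first orthogonality relation with $\chi\in\bbZ_h$ and invoking the discrete Korn estimate \eqnref{:dkorn} (equivalently, coercivity of $a(.,.)$ on $\bbZ_h$) bounds $a(\chi,\chi)$ below by $c_a\norm{\chi}^2$. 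The linear term $a(\eta,\chi)$ and the $b$--term are the cheap contributions: because $\chi\in\bbZ_h$ annihilates every $T_h\in\bbS_h$, one writes $b(S,\chi)=b(S-T_h,\chi)$ for arbitrary $T_h\in\bbS_h$, so continuity of $b(.,.)$ converts this genuinely nonconforming term (the discrete kernel is \emph{not} contained in the continuous one) into the stress approximation error $\inf_{T_h}\normS{S-T_h}$.

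The crux is the nonlinear discrepancy $\Delta c(\chi)$. Writing $(G,u)-(G_h,u_h)=\eta+\chi$ and using trilinearity,
\begin{align*}
  \Delta c(\chi)
  &= c(\eta,(G,u),\chi) + c(\chi,(G,u),\chi)\\
  &\quad + c((G_h,u_h),\eta,\chi) + c((G_h,u_h),\chi,\chi).
\end{align*}
The last term \emph{vanishes}, since the skew--symmetry in Definition \ref{def:abc} gives $c((F,w),(G,u),(G,u))=0$ and hence $c((G_h,u_h),\chi,\chi)=0$; this is precisely the structural property the formulation was designed to preserve. The two terms linear in $\chi$ are bounded by $C\norm{\eta}\norm{\chi}$ with an $h$--independent constant, thanks to the a priori bounds of Theorem \ref{thm:existence} and the discrete embedding of Lemma \ref{lem:bbZh} (this is where quasi-uniformity enters, guaranteeing $h$--uniform $L^6$/$L^4$ control of the discrete velocities). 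The single remaining quadratic term satisfies $|c(\chi,(G,u),\chi)|\le C_c\norm{(G,u)}\norm{\chi}^2\le (C_c/c_a)\lfourthirds{f}\norm{\chi}^2$; \textbf{this is the main obstacle}, and it is defeated by the smallness hypothesis: once $(C_c/c_a)\lfourthirds{f}<c_a/2$ it is absorbed into the coercivity term on the left. Collecting these estimates and dividing by $\norm{\chi}$ yields
$$
\norm{\chi}\le C\Big(\norm{\eta}+\inf_{T_h\in\bbS_h}\normS{S-T_h}\Big),
$$
and the triangle inequality together with the Fortin comparison bounds $\ltwo{G-G_h}+\lfour{u-u_h}$ by the three best--approximation quantities.

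Finally I would recover the stress. Let $\hat{S}_h\in\bbS_h$ be a best approximation of $S$. The discrete inf--sup condition \eqnref{:dinfsup} gives
$$
c_b\normS{S_h-\hat{S}_h}\le \sup_{(H_h,v_h)}\frac{b(S_h-\hat{S}_h,(H_h,v_h))}{\norm{(H_h,v_h)}},
$$
and substituting $b(S_h,(H_h,v_h))$ from \eqnref{:dmNSh} and $b(S,(H_h,v_h))$ from \eqnref{:dmNS} expresses $b(S_h-\hat{S}_h,(H_h,v_h))$ in terms of $a((G-G_h,u-u_h),\cdot)$, the already controlled discrepancy $\Delta c$, and $b(S-\hat{S}_h,\cdot)$. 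Continuity of $a(.,.)$, $b(.,.)$, $c(.,.,.)$ and the velocity/gradient bound just obtained then give $\normS{S_h-\hat{S}_h}\le C(\ltwo{G-G_h}+\lfour{u-u_h}+\normS{S-\hat{S}_h})$, and a last triangle inequality delivers the stated estimate. The only genuinely delicate points are the small--data absorption of $c(\chi,(G,u),\chi)$ and the need for $h$--uniform continuity of $c(.,.,.)$ on $\bbZ_h$, both of which the hypotheses (small $\lfourthirds{f}$, quasi-uniform meshes, Lemma \ref{lem:bbZh}) are tailored to supply.
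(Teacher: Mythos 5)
Your proposal is correct and follows essentially the same route as the paper's proof: Galerkin orthogonality, splitting the error through an interpolant in the discrete kernel $\bbZ_h$, coercivity of $a(.,.)$ on $\bbZ_h$ together with the skew-symmetry of $c(.,.,.)$, absorption of the remaining quadratic term under the small-data hypothesis, the optimal-approximation property of $\bbZ_h$ supplied by the discrete inf--sup condition, and recovery of the stress error from the orthogonality relation and the inf--sup condition for $b(.,.)$. The paper only sketches these steps, and your write-up fills in the same details (the four-term expansion of the nonlinear discrepancy with $c((G_h,u_h),\chi,\chi)=0$, and the replacement $b(S,\chi)=b(S-T_h,\chi)$ exploiting that $\chi$ annihilates $\bbS_h$) without deviating from its argument.
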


\begin{proof} The proof is similar to the standard approach used for
  the classical mixed formulation of the Navier--Stokes equations
  \cite{la08}, and will only be sketched here.  The
  Galerkin orthogonality condition becomes
\begin{multline*}
  a\left((G-G_h,u-u_h), (H_h,v_h) \vph\right)
  + b\left(S-S_h, (H_h,v_h) \vph\right) \\
  = c\left((G_h,u_h), (G_h,u_h), (H_h,v_h) \vph\right) - 
  c\left((G,u), (G,u), (H_h,v_h) \vph\right),
\end{multline*}
for all $(H_h,v_h) \in \bbG_h \times \bbU_h$. 
Fix $(G_p, u_p) \in \bbZ_h$ and write 
$$
(E,e) \equiv (G-G_h, u-u_h) 
= (G-G_p, u-u_p) + (G_p-G_h, u_p-u_h) 
\equiv (E_p, e_p) + (E_h, e_h).
$$
Setting $(H_h,v_h) = (E_h, e_h) \in \bbZ_h$ and using the coercivity
of $a(.,.)$ on $\bbZ_h$ and skew-symmetry of $c(.,.,.)$ it follows
that
$$
c \norm{(E_h,e_h)}
\leq C \left( \norm{(E_p,e_p)} + \normS{S-T_h}
+ \norm{(G,u)} \norm{(E_h,e_h)} \right),
$$
where $T_h \in \bbS_h$ is arbitrary. When $\norm{(G,u)} \leq C \lfourthirds{f}$
is sufficiently small, the last term on the right can be absorbed into
the left to show
\begin{eqnarray*}
\norm{(E, e)}
&\leq& C \big( \inf_{(G_p,u_p) \in \bbZ_h} \norm{(G-G_p, u-u_p)} 
  + \inf_{T_h \in \bbS_h} \normS{S-T_h} \big) \\
&\leq& C \big( \inf_{(H_h,v_h) \in \bbG_h \times \bbU_h} \norm{(G-H_h, u-v_h)} 
  + \inf_{T_h \in \bbS_h} \normS{S-T_h} \big), 
\end{eqnarray*}
where the last line follows from the property that the discrete
kernels $\bbZ_h$ optimally approximate $\bbZ$ when $b(.,.)$ satisfies
the inf-sup condition.  The error estimate for the stress now follows
from the orthogonality relation and the inf-sup property of $b(.,.)$.
\end{proof}

\section{Finite Element Spaces}\label{sec:fespace}
This section considers the development of finite element subspaces
satisfying the crucial inf-sup condition in Assumption
\ref{ass:infsup}. Lemma \ref{lem:infsup} below provides several
equivalent formulations of the inf-sup condition useful for this task.
This lemma shows that if $\bbG_h \times \bbU_h \times \bbS_h$
satisfies Assumption \ref{ass:infsup}, then the space $\bbG_h^{skw}
\times \bbU_h \times \bbS_h$ is a stable space for the elasticity
problem with weak symmetry \cite{brfo91,arfawi07,boffibook}; here
$\bbG_h^{skw}$ denotes the subspace of skew matrices in $\bbG_h$.  
However, this is not sufficient; an additional property is required if
Assumption \ref{ass:infsup} is to hold. In Section \ref{sec:PEERSAFW}
it is shown that in two dimensions the finite element spaces developed
for the elasticity problem will typically inherit the additional
requirement; however, this is not so in three dimensions. This issue
is circumvented in Section \ref{sec:fespaces3d} which develops a new
family of elements satisfying Assumption \ref{ass:infsup} in two and
three dimensions.

The following lemma reformulates the inf-sup condition of
Assumption \ref{ass:infsup} into a form more amenable to analysis
using macroelement techniques.

\begin{lemma} \label{lem:infsup}
  Let $\bbG \subset \Ltwo^{d \times d}$ be closed under transpose,
  $\bbU \subset \Lfour^d$, and 
  \[\bbS \subset \left\{ S\in \Ltwo^{d \times d} \sst \dv(S) \in \Lfourthirds^d
  \text{ and }  \int_\Omega tr(S) = 0\right\},\]
  be closed subspaces.
  Let
  \begin{align}
  \bbZ &= \{(G,u) \in \bbG \times \bbU \sst 
  (G,T) + (u, \dv(T)) = 0, \,\, T \in \bbS\},\nonumber\\
    Z &= \{T \in \bbS \sst (u,\dv(T)) = 0, \,\, u \in \bbU\},\label{eqn:zdefs} \\
    Z^{sym} &= \{T \in \bbS \sst (\Gskew,T) + (u,\dv(T)) = 0, \nonumber\,\, 
    (\Gskew, u) \in \bbG^{skw} \times \bbU\},
  \end{align}
  where $\bbG = \bbG^{skw} \oplus \bbG^{sym}$ is the decomposition of $\bbG$
  into skew-symmetric and symmetric matrices.
  Then the following are equivalent:
  \begin{enumerate}
  \item There exist constants $c$ and $C > 0$ such that
    \begin{align*}
      \sup_{(G,u) \in \bbG \times \bbU} 
      \frac{(G,T) + (u, \dv(T))}{\norm{(G,u)}} &\geq c \normS{T},
      & T &\in \bbS, \\[1ex]
      \norm{(\Gskew,u)} & \leq C \ltwo{\Gsym},
      & (G,u) &\in \bbZ.
    \end{align*}

  \item There exists a constant $c > 0$ such that    
    \begin{align}
      \sup_{T \in \bbS}
      \frac{(\Gskew,T) + (u,\dv(T))}{\normS{T}} &\geq c \norm{(\Gskew,u)}, 
      &\quad (\Gskew,u) &\in \bbG^{skw} \times \bbU, \label{eqn:bSkew} \\
      \sup_{\Gsym \in \bbG^{sym}}
      \frac{(\Gsym,T)}{\ltwo{\Gsym}} &\geq c \normS{T},
      &\quad T &\in Z^{sym}. \label{eqn:bSym}
    \end{align}

  \item There exists a constant $c > 0$ such that
    \begin{align*}
      \sup_{T \in \bbS} \frac{(u,\dv(T))}{\normS{T}} &\geq c \lfour{u},
      &\qquad u &\in \bbU, \nonumber\\
      \sup_{T \in Z} \frac{(\Gskew,T)}{\normS{T}} &\geq c \ltwo{\Gskew},
      &\quad \Gskew &\in \bbG^{skw}, \nonumber\\
      \sup_{\Gsym \in \bbG^{sym}}
      \frac{(\Gsym,T)}{\ltwo{\Gsym}} &\geq c \normS{T},
      &\quad T &\in Z^{sym}.
    \end{align*}
  \end{enumerate}
\end{lemma}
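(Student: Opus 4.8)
The plan is to translate every inf-sup inequality into a surjectivity-with-bound (equivalently, bounded-below) statement for an associated operator, and then to exploit the additive splitting of $b((G,u),T)=(G,T)+(u,\dv T)$ induced by $\bbG=\bbG^{skw}\oplus\bbG^{sym}$. Set $X=\bbG\times\bbU$, $X_1=\bbG^{skw}\times\bbU$, $X_2=\bbG^{sym}$, and define $B:X\to\bbS'$ by $\langle B(G,u),T\rangle=(G,T)+(u,\dv T)$, with restrictions $B_1:X_1\to\bbS'$ (symmetric part dropped) and $B_2:X_2\to\bbS'$, $\langle B_2\Gsym,T\rangle=(\Gsym,T)$; then $\ker B=\bbZ$ and $\ker B_1'=Z^{sym}$. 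Since $\bbG,\bbU,\bbS$ are closed subspaces of reflexive spaces they are reflexive, so I may use throughout the dictionary: an inequality $\sup_{x}b(x,m)/\norm{x}\ge c\norm{m}$ (supremum over the primal variable) is, by the closed range theorem, equivalent to surjectivity of $x\mapsto b(x,\cdot)$, its dual form $\sup_{m}b(x,m)/\norm{m}\ge c\norm{x}$ equivalent to surjectivity of the adjoint, and a surjection $C$ of reflexive Banach spaces obeys $\norm{Cx}\ge c\,\mathrm{dist}(x,\ker C)$ by the open mapping theorem. The one structural caveat is that $\bbS$ carries the non-Hilbert norm $\normS{S}^2=\ltwo{S}^2+\lfourthirds{\dv S}^2$, so no orthogonal projection onto $Z^{sym}$ or $\bbZ$ is available and every splitting must go through these Banach-space tools.

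For condition (1) $\Leftrightarrow$ condition (2), observe first the two cheap implications. Restricting the surjectivity of $B$ (first line of (1)) to $T\in Z^{sym}=\ker B_1'$, where $(G,T)+(u,\dv T)$ collapses to $(\Gsym,T)$, yields \eqnref{:bSym} at once; and \eqnref{:bSkew} is precisely ``$B_1$ bounded below'', so substituting $(\Gsym,T)=-\big((\Gskew,T)+(u,\dv T)\big)$ for $(G,u)\in\bbZ$ recovers the Korn bound $\norm{(\Gskew,u)}\le C\ltwo{\Gsym}$ of (1). The two real implications are: (a) from \eqnref{:bSkew}--\eqnref{:bSym} back to surjectivity of $B$, where for a given $T$ I pick a near-best $m_0\in Z^{sym}$, use $\norm{B_1'T}_{X_1'}\ge c\,\mathrm{dist}(T,Z^{sym})$ and $\norm{B_2'm_0}_{X_2'}\ge c\normS{m_0}$, and combine them by a weighted triangle inequality against $\norm{B'T}_{X'}\simeq\norm{B_1'T}_{X_1'}+\norm{B_2'T}_{X_2'}$; and (b) from (1) to \eqnref{:bSkew}, where I write $\norm{B_1(\Gskew,u)}_{\bbS'}\ge c\,\mathrm{dist}\big((\Gskew,u),\bbZ\big)$ and then bound the distance below by $c'\norm{(\Gskew,u)}$.

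For condition (2) $\Leftrightarrow$ condition (3) the third line of (3) is literally \eqnref{:bSym}, so only the equivalence of \eqnref{:bSkew} with the first two lines remains. Split $B_1$ as $B_{1a}u=(u,\dv\cdot)$ and $B_{1b}\Gskew=(\Gskew,\cdot)$, so that $Z=\ker B_{1a}'$; in operator terms \eqnref{:bSkew} is surjectivity of $B_1'$ onto $X_1'=(\bbG^{skw})'\times\bbU'$, the first line of (3) is surjectivity of $B_{1a}'$ onto $\bbU'$, and the second is surjectivity of $B_{1b}'$ restricted to $Z$ onto $(\bbG^{skw})'$. Surjectivity of $B_1'$ forces both by solving $B_1'T=(\psi,0)$ and $B_1'T=(0,\phi)$ with $\psi\in(\bbG^{skw})'$, $\phi\in\bbU'$; for the converse, given $(\psi,\phi)$ I first meet the $\bbU'$-component with some $T_2$ ($B_{1a}'T_2=\phi$), then cancel the spurious $(\bbG^{skw})'$-part $\psi-B_{1b}'T_2$ with a $T_3\in Z$ supplied by the second line, and set $T=T_2+T_3$; since $T_3\in Z$ leaves the $\bbU'$-component unchanged, $B_1'T=(\psi,\phi)$ with the norm controlled.

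I expect the only genuinely non-formal step to be the distance lower bound in (b): $\mathrm{dist}\big((\Gskew,u),\bbZ\big)\ge c'\norm{(\Gskew,u)}$, where the Korn hypothesis (1b) and the geometry of $\bbZ$ truly interact. For any $(H,v)\in\bbZ$ the Korn bound forces $\norm{(H^{skw},v)}\le C\ltwo{H^{sym}}$, and the $L^2$-orthogonality $\ltwo{G}^2=\ltwo{\Gskew}^2+\ltwo{\Gsym}^2$ in the $G$-slot lets me write $\norm{(\Gskew-H,u-v)}^2=\norm{(\Gskew-H^{skw},u-v)}^2+\ltwo{H^{sym}}^2$; minimising this over $a=\ltwo{H^{sym}}\ge0$, handling the $u$-slot by the triangle inequality so that no Hilbert structure on $\bbU$ or $\bbS$ is used, yields an explicit constant depending only on $C$ (equal to $(1+C^2)^{-1/2}$ for the Euclidean product norm). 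Everything else reduces to bookkeeping with the closed range and open mapping theorems: the point is that each inf-sup hypothesis supplies exactly the bounded-below estimate those theorems require, so closedness of the relevant ranges is never in doubt.
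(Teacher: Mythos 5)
Your proposal is correct: the closed-range/open-mapping dictionary is applied properly, the two ``cheap'' implications hold, the gluing argument recovering surjectivity of $B$ from \eqnref{:bSkew}--\eqnref{:bSym}, the $T=T_2+T_3$ construction in the equivalence of (2) and (3), and the distance bound $\mathrm{dist}\bigl((\Gskew,u),\bbZ\bigr)\ge (1+C^2)^{-1/2}\norm{(\Gskew,u)}$ (the only place the Korn hypothesis genuinely enters) all check out. The difference from the paper is structural rather than mathematical: the paper's proof contains almost no analysis of its own. It writes the form as the twofold saddle point form $b_1(T,\Gsym)+b_2(T,(\Gskew,u))$ with $b_1(T,\Gsym)=(\Gsym,T)$ and $b_2(T,(\Gskew,u))=(\Gskew,T)+(u,\dv(T))$, and then cites the abstract equivalence theorems of the authors' earlier work \cite[Theorems 3.1 and 3.2]{howa09}: Theorem 3.2 there yields the equivalence of (1) and (2), and Theorem 3.1 splits condition \eqnref{:bSkew} into the first two lines of (3). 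Your argument uses exactly the same two-level decomposition ($\bbG^{sym}$ against $\bbG^{skw}\times\bbU$, then $\bbG^{skw}$ against $\bbU$), so what you have done, in effect, is reconstruct the proofs of the two cited abstract results in this concrete setting. The citation buys the paper brevity and places the lemma inside a reusable general theory of twofold saddle point problems; your version buys self-containedness, makes visible that the only tools needed are the closed range theorem, a quantitative Hahn--Banach lifting, and near-best approximants in place of orthogonal projections (correctly flagged, since $\bbS$ and $\bbU$ are not Hilbert spaces), and it produces the explicit constant $(1+C^2)^{-1/2}$ in the Korn-to-distance step, which the citation leaves implicit.
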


\begin{remark}
  \begin{enumerate}
  \item Note that tensors in $Z^{sym}$ are only ``weakly symmetric'', i.e., they
    need not be symmetric pointwise.

  \item The first condition of (2), or equivalently the first two
    conditions of (3), are the stability conditions for the elasticity
    problem with weak symmetry. The development of stable spaces for
    this problem can be found in
    \cite{arbrdo84,arfawi07,bobrfo09,cogogu10,gogu10,st88}.
    
  \item The last condition of (2) and (3) is necessary to compute the
    full gradient, $G$. Spaces developed for the elasticity problem
    will only compute the symmetric part of the gradient if this
    condition fails.

  \item If $\dv(\bbS)\subset \bbU$ then $\normS{T} = \ltwo{T}$ for tensors $T \in Z^{sym}$.
  \end{enumerate}
\end{remark}

\begin{proof}
The hypothesis that $\bbG$ is closed under transpose allows $\bbG$
to be decomposed into a direct sum of skew and symmetric matrices,
$\bbG = \bbG^{skw} \oplus \bbG^{sym}$.
Then writing
$$
b_1(T, \Gsym) + b_2(T, (\Gskew, u))
= (\Gsym,T) + (\Gskew,T) + (u, \dv(T)),
$$
the equivalence of conditions (1) and (2) follows from the equivalence
of conditions (1) and (2) in \cite[Theorem 3.2]{howa09}.

The equivalence of conditions (1) and (3) in \cite[Theorem 3.1]{howa09}
shows that the inf-sup condition in equation \eqnref{:bSkew} is
equivalent to
\begin{align*}
\sup_{T \in \bbS} \frac{(u,\dv(T))}{\normS{T}} &\geq c \lfour{u},
&\qquad u &\in \bbU, \\
\sup_{T \in Z} \frac{(G^{skw},T)}{\normS{T}} &\geq c \ltwo{G^{skw}},
&\qquad G^{skw} &\in \bbG^{skw}.
\end{align*}
\end{proof}

\subsection{Macroelement Construction}
The following notation facilitates a unified
discussion of the two and three dimensional situation.

\begin{notation}
\begin{enumerate}
\item If $\bbG_h^{skw} \times \bbU_h \times \bbS_h$ is a subspace of
  $\bbG^{skw} \times \bbU \times \bbS$, then $\bbZ_h, Z_h,$ and
  $Z^{sym}_h$ denote the analogues of the spaces $\bbZ, Z,$ and
  $Z^{sym}$ defined in \eqnref{:zdefs}, respectively.

\item When $d=2$, if $a:\Omega \rightarrow \Re$ and $\psi:\Omega
  \rightarrow \Re^2$ then
  $$
  W(a) 
  = \begin{bmatrix} 0 & a \\ -a & 0\end{bmatrix}, 
  \qquad 
  \Curl(\psi) 
  =  \begin{bmatrix} -\psi_{1,y} & \psi_{1,x} \\ 
    -\psi_{2,y} & \psi_{2,x}\end{bmatrix}.
  $$
  If $V_h \subset \Hone$, then
  $
  \Curl(V_h) = \{ \Curl(\psi) \sst \psi \in V_h^2 \}.
  $
  
\item  When $d=3$, if $a:\Omega \rightarrow \Re^3$ and $\psi:\Omega
  \rightarrow \Re^{3 \times 3}$ then
  \begin{small}
  $$
  W(a)=\begin{bmatrix} 0 & -a_3 & a_2 \\ a_3 & 0 & -a_1 
    \\ -a_2 & a_1 & 0 \end{bmatrix},\quad
  \Curl(\psi) = \begin{bmatrix} 
    \psi_{13,y}-\psi_{12,z} & \psi_{11,z}-\psi_{13,x} 
    & \psi_{12,x}-\psi_{11,y}\\ 
    \psi_{23,y}-\psi_{22,z} & \psi_{21,z}-\psi_{23,x} 
    & \psi_{22,x}-\psi_{21,y} \\ 
    \psi_{33,y}-\psi_{32,z} & \psi_{31,z}-\psi_{33,x} 
    & \psi_{32,x}-\psi_{31,y}\end{bmatrix}. 
  $$
  \end{small}
  If $V_h \subset \Hone$, then
  $
  \Curl(V_h) = \{ \Curl(\psi) \sst \psi \in V_h^{3 \times 3} \}.
  $
\end{enumerate}
\end{notation}

If $\bbG_h^{skw} \times \bbU_h \times \bbS_h$ is a stable triple of
finite element spaces for the elasticity problem with weak symmetry,
the macroelement technique \cite{brfo91,st84} can be used to establish
the last condition in (3) of Lemma \ref{lem:infsup} by showing that
the only tensors $S_h \in Z^{sym}$ orthogonal to $\bbG_h^{sym}$ on a
macroelement take the form $S_h = \delta I$ for $\delta \in \Re$.

If $S_h \in Z_h^{sym}$ we suppose the subspace $\bbG_h^{sym}$ of
symmetric trace-free matrix valued functions is sufficiently large to
guarantee
$$
\int_M S_h : G_h^{sym} = 0, \quad G_h^{sym} \in \bbG_h^{sym}
\qquad \Rightarrow \qquad 
S_h = \delta I + W(a)
\,\,\text{ on } M,
$$
for each macroelement. If $a \equiv 0$ and $\delta \in \Re$ for tensors in
$Z^{sym}_h$ with this structure, the macroelement methodology then
shows
$$
\sup_{\Gsym_h \in \bbG^{sym}_h}
\frac{(\Gsym_h,S_h)}{\ltwo{\Gsym_h}} \geq c \ltwoo{S_h},
\qquad
S_h \in Z^{sym}_h.
$$
The following line of argument will be used for this last step.

\begin{enumerate}
\item If $n$ is the normal to a common $(d-1)$ face of two finite elements
  of $M$, the jump, $[S_h]n$, of the normal component of $S_h$
  vanishes.
  \begin{enumerate}
  \item In two dimensions 
    $$
    [S_h] n = [\delta] n - [a] n^\perp
    \quad \text{ where } \quad
    (n_1,n_2)^\perp = (-n_2,n_1).
    $$
    Since $n$ and $n^\perp$ are linearly independent it follows that
    $\delta$ and $a$ are continuous on $M$.
  \item In three dimensions
    $$
    [S_h]n = [\delta] n - [a]\times n
    $$
    It follows that $\delta$ is continuous and $[a_{tan}]=0$ (the jump
    in the tangential components of $a$ vanishes).
  \end{enumerate}

\item Tensors in $Z^{sym}_h$ are divergence free which restricts the
  jumps in the derivatives of $\delta$ and $a$.
  \begin{enumerate} 
  \item In two dimensions $\dv(S_h) = \nabla \delta - (\nabla a)^\perp$ on each
    element.  Cross differentiating shows $\Delta \delta = \Delta a = 0$ on
    each finite element of $M$,

    Also, $[\nabla \delta] = [(\nabla a)^\perp]$ along an edge
    between two finite element of $M$. Then
    $$
    [\nabla \delta].n 
    = [(\nabla a)^\perp].n 
    = \partial [a] / \partial e
    = 0,
    $$
    where $\partial [a] / \partial e$ denotes the derivative of $[a]$
    along the edge. It follows that $[\nabla \delta].n = 0$ so $\delta \in
    C^1(M)$ and similarly $a \in C^1(M)$ so $\delta$ and $a$ are
    harmonic on $M$, and hence smooth. For the usual finite element
    spaces this requires $a$ and $\delta$ each to be harmonic
    polynomials on $M$.

  \item In three dimensions, $\dv(S_h) = \nabla \delta + \curl(a)$ on
    each finite element.  Cross differentiation shows $\Delta \delta =
    \curl(\curl(a)) = 0$ on each finite element of $M$.  Also $[\nabla
    \delta] = [\curl(a)]$ on a face $k$ between two finite elements of $M$.
    Stokes' theorem shows
    $$
    \int_k -[\nabla \delta].n \, da
    = \int_k [\curl(a)].n \, da
    = \int_{\partial k} [a].ds 
    = 0,
    $$
    since $a$ is continuous at the edges (they are tangent to the
    faces).  If $\delta$ is piecewise linear then $[\nabla \delta].n =
    0$ so $\delta \in C^1(M)$ is smooth.
  \end{enumerate}
  
\item Functions in $Z^{sym}_h$ are orthogonal to $\bbG_h^{skw}$, thus
  if this space is sufficiently rich to annihilate $W(a)$ when $a$ is
  as above, conclude $a$ = 0. Then $\nabla \delta = (\nabla a)^\perp =
  0$ (2d) or $\nabla \delta = -\curl(a)=0$ (3d); and in either case
  $\delta$ is constant.
\end{enumerate}

In three dimensions the last step requires $\bbG_h^{skw}$ to
annihilate a much larger collection of (vector
valued) functions, $a$, and fails for many elements developed
for the elasticity problem with weak symmetry.

\subsection{Construction of Finite Elements}\label{sec:PEERSAFW}
In this section the macroelement methodology is used to develop finite
element triples satisfying Assumption \ref{ass:infsup}.  Two elements
will be developed for the two dimensional problem using well-known
elements for the elasticity problem with weak symmetry; counter
examples show the analogous construction fails in three dimensions.
Subsequently a new family of elements is developed which provides both
two and three dimensional elements satisfying Assumption \ref{ass:infsup}.

The following notation is adopted for the classical finite element spaces.
\begin{notation} 
Let $\{\calT_h\}_{h>0}$ be a family of triangulations of a domain
$\Omega \subset \Re^d$. 
\begin{enumerate}
\item If $M \subset \calT_h$,
  \begin{align*}
    \calP_k^{cont}(M) 
    &= \{p_h \in C(M) \sst p_h|_K \in \calP_k(K), \,\, K \in M \} 
    \quad \text{ and } \\
    \calP_k^{disc}(M)
    &= \{p_h \in L^2(M) \sst p_h|_K \in \calP_k(K), \,\, K \in M \}
  \end{align*} 
  denote the spaces of continuous and discontinuous piecewise
  polynomials of degree $k$ on $M$ respectively.  Vectors with components
  in these spaces will be denoted $\calP_k^{cont}(M)^d$ and
  $\calP_k^{disc}(M)^d$, and $d\times d$ tensors with polynomial
  components are defined similarly, and
  $$
  \calP_k^{cont}(M)^{d\times d}_{skw}
  \quad\text{ and }\quad
  \calP_k^{disc}(M)^{d\times d}_{sym}
  $$
  denote the skew and symmetric subspaces.

\item If $M \subset \calT_h$ then $RT_k(M) \subset H(\dv;M)$ and
  $BDM_k(M) \subset H(\dv;M)$ denote the subspaces of tensor valued
  functions with rows in the classical Raviart--Thomas space of order
  $k$ \cite{rath77} and Brezzi--Douglas--Marini space of degree $k$
  \cite{brdoma85}.

\item The bubble function on $\calT_h$ is denoted by $b$; this
  function is piecewise cubic when $d=2$ and quartic when $d=3$.
\end{enumerate}
\end{notation}

\subsubsection{Augmented PEERS Element} \label{sec:PEER}
In this section we augment the two dimensional PEERS
element of Arnold, Brezzi, and Douglas \cite{arbrdo84} with a
suitable class of symmetric matrices to obtain a triple satisfying
Assumption \ref{ass:infsup}.  A counterexample shows that
this construction fails in three dimensions.

\begin{lemma} Let $\calT_h$ be a triangulation of a bounded Lipschitz
  domain $\Omega \subset \Re^2$ and let
  \begin{align*}
    \bbG_h &= \bbG\cap\left(\calP_1^{cont} (\calT_h)^{2 \times 2}_{skw}
    \oplus \calP_1^{disc}(\calT_h)^{2 \times 2}_{sym} \vph\right), \\
    \bbU_h &= \calP_0^{disc}(\calT_h)^2, \\
    \bbS_h &= \bbS \cap \left( RT_0(\calT_h) 
    \oplus \calP_0^{disc}(\calT_h)^2 \otimes (\nabla b)^\perp \vph\right).
  \end{align*}
  Then the triple $\bbG_h \times \bbU_h \times \bbS_h$ satisfies 
  Assumption \ref{ass:infsup} with constant depending only upon
  the aspect ratio of $\calT_h$.
\end{lemma}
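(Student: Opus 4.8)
The plan is to verify Assumption \ref{ass:infsup} by checking the equivalent condition (3) of Lemma \ref{lem:infsup} for the discrete triple; once (3) holds, condition (1) follows, and (1) already contains both the discrete inf--sup estimate \eqnref{:dinfsup} and the discrete Korn inequality \eqnref{:dkorn}, so no separate Korn argument is needed. As a preliminary reduction I would record that $\dv(\bbS_h) \subset \bbU_h$: the divergence of $RT_0(\calT_h)$ lies in $\calP_0^{disc}(\calT_h)^2 = \bbU_h$, and each enrichment row $c\,(\nabla b)^\perp$ is divergence free since $\dv((\nabla b)^\perp)=0$. By Remark item 4 this means every $S_h \in Z^{sym}_h$ is divergence free and $\normS{S_h} = \ltwo{S_h}$ there, reducing the third estimate of (3) to a statement in the $\ltwo{\cdot}$ norm.

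The first two conditions of Lemma \ref{lem:infsup}(3) are exactly the stability conditions for the PEERS triple $\bbG_h^{skw} \times \bbU_h \times \bbS_h$ for linear elasticity with weak symmetry. Surjectivity of $\dv:\bbS_h \to \bbU_h$ supplies the first estimate, while the role of the bubble enrichment $\calP_0^{disc}(\calT_h)^2 \otimes (\nabla b)^\perp$ is precisely to furnish the inf--sup condition for the skew (rotation) variable $\Gskew$; both are established by Arnold--Brezzi--Douglas \cite{arbrdo84}, so I would invoke that result, with constants depending on the aspect ratio of $\calT_h$.

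The substance of the proof is the third (``full gradient'') condition,
$$
\sup_{\Gsym_h \in \bbG^{sym}_h} \frac{(\Gsym_h, S_h)}{\ltwo{\Gsym_h}} \geq c\,\ltwoo{S_h}, \qquad S_h \in Z^{sym}_h,
$$
which I would obtain via the macroelement program spelled out before Section \ref{sec:PEERSAFW}, using single triangles together with their neighbours as macroelements $M$ and assembling by the macroelement technique \cite{brfo91,st84}. Step one shows that the symmetric augmentation $\calP_1^{disc}(\calT_h)^{2\times2}_{sym}$ is rich enough that $\int_M S_h : \Gsym_h = 0$ for all $\Gsym_h \in \bbG^{sym}_h$ forces $S_h = \delta I + W(a)$ on each element of $M$. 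Step two uses $H(\dv)$--conformity: the jump relation $[S_h]n = [\delta]n - [a]n^\perp$ and the linear independence of $n$ and $n^\perp$ make $\delta$ and $a$ continuous across interior edges. Step three uses that tensors in $Z^{sym}_h$ are divergence free, so $\nabla\delta = (\nabla a)^\perp$ on each element; cross--differentiation forces $\delta$ and $a$ to be harmonic, and matching normal derivatives across edges upgrades them to $C^1(M)$, hence to harmonic polynomials of the admissible degree. Finally, orthogonality of $S_h$ to $\bbG_h^{skw} = \calP_1^{cont}(\calT_h)^{2\times2}_{skw}$ reads $(W(\phi),W(a)) = 2\int_M \phi\, a = 0$ for every continuous piecewise linear $\phi$, and the richness of $\calP_1^{cont}$ forces $a \equiv 0$; then $\nabla\delta = 0$ and $\delta$ is constant on $M$. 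With $S_h = \delta I$ on each macroelement, the macroelement estimate together with the constraint $\int_\Omega tr(S_h)=0$ yields the third condition with the stated quotient norm.

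I expect the main obstacle to be the element-local structural reduction of step one (together with the divergence constraint). The standard PEERS analysis never tests the stress against the symmetric part of $\bbG$, so the control of $S_h^{sym}$ afforded by $\bbG^{sym}_h$ is genuinely new. The delicate point is that the enrichment $\calP_0^{disc}(\calT_h)^2 \otimes (\nabla b)^\perp$ contributes a \emph{quadratic} symmetric trace-free part which is not annihilated by the degree-one test space $\calP_1^{disc}(\calT_h)^{2\times2}_{sym}$ alone; one must combine orthogonality to $\bbG^{sym}_h$ with the constraints $\dv S_h = 0$ and normal continuity to force the bubble coefficient, and hence the entire symmetric trace-free part of $S_h$, to vanish. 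Carrying out this element-by-element computation for the bubble--$RT_0$ combination is the heart of the proof, and it is exactly the step whose three-dimensional analogue fails.
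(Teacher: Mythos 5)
Your overall route is the paper's: invoke PEERS stability \cite{arbrdo84} for the first two conditions of Lemma \ref{lem:infsup}(3), note $\dv(\bbS_h)\subset\bbU_h$, and run the macroelement program for the third condition. The genuine flaw is your choice of macroelement. For the augmented PEERS element the skew test space is $\bbG_h^{skw}=\bbG\cap\calP_1^{cont}(\calT_h)^{2\times2}_{skw}$, whose elements are \emph{continuous}; the macroelement technique only admits test functions supported in $\bar M$ (this locality is precisely what yields an $h$-independent constant), and on your macroelement $M$ --- a triangle plus its three edge-neighbours --- every vertex of $M$ lies on $\partial M$, so the only continuous piecewise-linear function vanishing on $\partial M$ is identically zero. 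Hence there is nothing with which to test $W(a)$ locally: the local null space contains every $\delta I + W(a)$ with $\delta,a$ constant, and the macroelement machinery cannot deliver the inf--sup condition. Your appeal to ``every continuous piecewise linear $\phi$'' tacitly uses globally supported test functions, which sits outside the macroelement framework; at best it shows the kernel is trivial for each fixed mesh, not a uniform bound. The paper instead takes $M$ to be the patch of all triangles containing a fixed interior vertex $x_0$; then the hat function $\phi$ at $x_0$ belongs to $\bbG_h^{skw}$, is supported in $M$, and $0=\int_M W(a){:}W(\phi)=(2|M|/3)\,a$ forces $a=0$. (Triangle-plus-neighbours is the correct macroelement for the augmented AFW element of Lemma \ref{lem:bdm}, where $\bbG_h^{skw}$ is discontinuous piecewise constant; it is the wrong one here.)

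Second, you leave the elimination of the bubble coefficient unproved and call it the heart of the proof. It is in fact the ``elementary calculation'' the paper asserts, and it needs neither the divergence constraint nor normal continuity, only orthogonality to the piecewise-linear trace-free symmetric matrices: on a triangle $K$, for constant $\psi$ and trace-free symmetric $H$ with linear entries, integration by parts using $b|_{\partial K}=0$ gives
\begin{equation*}
\int_K (H\psi)\cdot(\nabla b)^\perp \;=\; -\int_K b\,\curl(H\psi),
\end{equation*}
and $\curl(H\psi)$ is constant; the choices
$H=\begin{pmatrix} 0 & x \\ x & 0\end{pmatrix}$ and
$H=\begin{pmatrix} x & 0 \\ 0 & -x\end{pmatrix}$
give $\curl(H\psi)=\psi_1$ and $-\psi_2$ respectively, so $\int_K b>0$ forces $\psi=0$. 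Thus that step is routine; the step on which your write-up actually breaks is the skew one above. With vertex-patch macroelements and this computation inserted, your argument becomes the paper's proof.
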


\begin{remark}
  The PEERS finite element space is $(\bbG\cap \calP_1^{cont}
  (\calT_h)^{2 \times 2}_{skw})\times \bbU_h \times \bbS_h$.
\end{remark}

\begin{proof}
  Let a typical macroelement be the set of triangles containing a
  specified vertex $x_0$ interior to $\Omega$. On each triangle the
  functions in $\bbS_h$ take the form
  $$
  S_h(x) = A + \alpha \otimes x + \psi \otimes (\nabla b(x))^\perp,
  \qquad A \in \Re^{2 \times 2}, \,\, \alpha,\psi \in \Re^2,
  $$
  and $\dv(S_h) = \alpha$. Since the
  average of $\nabla b$ vanishes on $K$, the divergence free tensors
  orthogonal to the piecewise constant trace-free matrices take the form
  $$
  S_h = \delta I + W(a) + \psi \otimes (\nabla b)^\perp,
  \qquad \delta, a \in \Re, \,\, \psi \in \Re^2.
  $$
  An elementary calculation shows that if $S_h$ is also orthogonal
  to $\bbG^{sym}_h$ then $\psi = 0$. Arguing
  as in steps (1) and (2) above shows $\delta$ and $a$ are constant functions
  on $M$. The space $\bbG^{skw}_h$ contains $W(\phi)$ where $\phi$
  is the piecewise linear ``hat'' function on $M$. Then 
  $$
  0 = \int_M W(a):W(\phi) = 2 a \int_M \phi = (2|M|/3) a,
  $$
  shows $a = 0$.
\end{proof}

The following example shows that in three dimensions the subspace
$Z^{sym}_h$ constructed from the PEERS element contains non-vanishing
skew-symmetric tensors so the inf--sup condition can not hold. This
is closely related to the property that the continuous $\calP_1^d \times 
\calP_1$ space is not div--stable.

\begin{example} 
  Given a triangulation $\calT_h$ of a domain $\Omega \subset \Re^3$
  let $p_h \in \calP_1(\calT_h) \cap \Honeo$ be piecewise
  linear on $\calT_h$ and let $S_h = W(\nabla p_h)$. Then $S_h \in
  RT_1(\calT_h)$ and is skew.
  
  The PEERS space has $\bbG^{skw} = \calP_1^{cont}(\calT_h)^{d
    \times d}_{skw}$, so $S_h \in Z^{sym}$ if
  $$
  0 
  = -\int_\Omega \nabla p_h . v_h
  = \int_\Omega  p_h  \dv(v_h)
  \qquad v_h \in \calP_1^{cont}(\calT_h)^d.
  $$
  Notice that $\dv(v_h) \subset \calP_0(\calT_h)$ and this
  later space has dimension equal to the number of tetrahedra
  in $\calT_h$ which we denote by $t$. The inf--sup condition
  will then fail if we show that the dimension of 
  $\calP_1(\calT_h) \cap \Honeo$, namely the number of
  internal vertices of $\calT_h$, is larger than $t$.

  Recall that Euler's formula states
  $
  t - f + e - v = O(1),
  $
  where $t$, $f$, $e$, and $v$, denote the number of tetrahedra,
  triangular faces, edges, and vertices of $\calT_h$. Since each
  tetrahedron has four faces and each (interior) face is the 
  intersection of two tetrahedra it follows that
  $4t \simeq 2f$. Similarly, $2 e = \dbar v$ where
  $\dbar$ is the average degree of the vertices in $\calT_h$. It
  follows that
  $$
  t \simeq v - e = (1 - 2/\dbar) v.
  $$
  This formula is asymptotically correct for large $v$ since
  the boundary contains $O(v^{2/3})$ vertices. Thus for large
  meshes the skew subspace of $Z_h^{sym}$ has dimension at
  least $O((2/\dbar) v)$.
\end{example}

\subsubsection{Augmented AFW Element} \label{sec:AFW} In this section
we augment the two dimensional Arnold-Falk-Winther element
\cite{arfawi07} with a suitable class of symmetric matrices to obtain
a triple satisfying Assumption \ref{ass:infsup}.  A counterexample
shows that this construction fails in three dimensions.

\begin{lemma} \label{lem:bdm} Let $\calT_h$ be a triangulation of a
  bounded Lipschitz domain $\Omega \subset \Re^2$ and let
  \begin{align*}
    \bbG_h &= \bbG \cap \left(\calP_0^{disc} (\calT_h)^{2 \times 2}_{skw}
    \oplus \calP_1^{disc}(\calT_h)^{2 \times 2}_{sym} \vph\right), \\
    \bbU_h &= \calP_0^{disc}(\calT_h)^2, \\
    \bbS_h &=  \bbS \cap BDM_1(\calT_h). 
  \end{align*}
  Then the triple $\bbG_h \times \bbU_h \times \bbS_h$ satisfies 
  Assumption \ref{ass:infsup} with constant depending only upon
  the aspect ratio of $\calT_h$.
\end{lemma}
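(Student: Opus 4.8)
The plan is to verify Assumption~\ref{ass:infsup} by establishing condition (3) of Lemma~\ref{lem:infsup} for the discrete triple, since by that lemma condition (3) is equivalent to the inf--sup estimate \eqnref{:dinfsup} and the discrete Korn inequality \eqnref{:dkorn} together. The first two inequalities of (3) are exactly the stability conditions for the elasticity problem with weak symmetry for the spaces $\bbG_h^{skw}\times\bbU_h\times\bbS_h = \calP_0^{disc}(\calT_h)^{2\times2}_{skw}\times\calP_0^{disc}(\calT_h)^2\times(\bbS\cap BDM_1(\calT_h))$, and these hold with $h$--independent constants by the Arnold--Falk--Winther analysis \cite{arfawi07}. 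Hence the only new work is the third inequality, $\sup_{\Gsym_h\in\bbG_h^{sym}}(\Gsym_h,S_h)/\ltwo{\Gsym_h}\ge c\,\ltwoo{S_h}$ for $S_h\in Z^{sym}_h$, which I would obtain by Stenberg's macroelement technique \cite{st84} with each macroelement $M$ the star of an interior vertex $x_0$. Following the template of the preceding PEERS lemma, it suffices to show that any $S_h\in Z^{sym}_h$ that is $L^2(M)$--orthogonal to $\bbG_h^{sym}$ reduces to $S_h=\delta I$ with $\delta$ a single constant on $M$; the dependence on the aspect ratio enters through the finitely many macroelement configurations.

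First I would characterize such an $S_h$ locally. Since every row of a $BDM_1$ tensor is a full affine vector field, $\dv(S_h)$ is element--wise constant, so $\dv(\bbS_h)\subset\bbU_h$; the defining relation $(u_h,\dv(S_h))=0$ for $u_h\in\bbU_h$ then forces $\dv(S_h)=0$ on each element (and, by Remark item~4, $\normS{S_h}=\ltwo{S_h}$ on $Z^{sym}_h$, so the target is the plain $L^2$ form). Next, the symmetric trace--free part of $S_h$ is itself affine, symmetric and trace--free, hence lies in $\bbG_h^{sym}=\calP_1^{disc}(\calT_h)^{2\times2}_{sym}$ element by element; testing the orthogonality relation against this part annihilates it, leaving $S_h=\delta I+W(a)$ on each triangle with $\delta$ and $a$ affine. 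This is the analogue of the PEERS reduction, but with no bubble contribution to remove and with $\delta,a$ now affine rather than constant.

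The globalization step is where the affine (rather than constant) character of $\delta,a$ must be controlled, and it follows steps (1)--(2) of the macroelement argument given above. Since $\bbS_h\subset BDM_1(\calT_h)\subset\Hdiv$, the normal jump $[S_h]n=[\delta]n-[a]n^\perp$ vanishes across each interior edge, and independence of $n,n^\perp$ gives $[\delta]=[a]=0$, so $\delta,a$ are continuous on $M$. The element--wise identity $\dv(S_h)=\nabla\delta-(\nabla a)^\perp=0$ then yields $[\nabla\delta]\cdot n=\partial[a]/\partial e=0$ along edges, which with continuity forces $\delta,a\in C^1(M)$; being piecewise affine and $C^1$ on the connected patch $M$, both are globally affine. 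Finally, orthogonality of $S_h$ to $\bbG_h^{skw}=\calP_0^{disc}(\calT_h)^{2\times2}_{skw}$ gives $\int_K W(a):W(c)=2c\int_K a=0$ for every constant $c$ and every triangle $K\subset M$, so the affine function $a$ vanishes at the centroid of each triangle of the star. Because $x_0$ is interior, the triangles fan around it and their centroids are not collinear, whence the globally affine $a$ must vanish identically; then $\nabla\delta=(\nabla a)^\perp=0$, so $\delta$ is constant and $S_h=\delta I$, which is precisely what the macroelement methodology needs.

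The main obstacle, relative to the PEERS case, is exactly this last reduction. In PEERS the skew space is $\calP_1^{cont}$ and a single hat--function test kills the already--constant $a$; here the skew space is only piecewise constant, so orthogonality yields merely the vanishing of the (now affine) $a$ at the triangle centroids, and one must combine the $C^1$ globalization with the geometric fact that the centroids surrounding an interior vertex are non--collinear. A secondary point to check is that shape regularity bounds the number of macroelement configurations up to affine equivalence, so that Stenberg's theory delivers one $h$--independent constant; the $\int_M$--type estimates then localize uniformly, and the failure of the three--dimensional analogue (noted in Section~\ref{sec:PEERSAFW}) is avoided since the planar $C^1$ argument has no defect.
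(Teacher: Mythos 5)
Your proposal follows the paper's own strategy almost step for step: invoke the Arnold--Falk--Winther stability for the first two conditions of Lemma \ref{lem:infsup}(3), then run a macroelement argument whose local algebra is identical to the paper's ($\dv(S_h)=0$ element-wise since $\dv(\bbS_h)\subset\bbU_h$, annihilation of the symmetric trace-free part by testing against $\bbG_h^{sym}$, the reduction $S_h=\delta I+W(a)$ with $\delta,a$ piecewise affine, the $C^1$ globalization making $\delta,a$ globally affine, and finally orthogonality to $\bbG_h^{skw}$ forcing the affine function $a$ to vanish at every triangle centroid). The one genuine divergence is the choice of macroelement: you take the star of an interior vertex (as the paper does for PEERS), whereas the paper's proof of this lemma takes a non-boundary triangle together with its three edge-neighbors. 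That choice is not cosmetic, because the geometric fact that closes the argument depends on the macroelement, and that fact is where all the real work in this lemma lives.

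This is exactly where your proposal has a gap: you assert that ``the triangles fan around $x_0$ and their centroids are not collinear'' as if it were evident. It is the crux, not a triviality --- the paper proves its corresponding statement (for a triangle and its three neighbors) in a dedicated appendix, and that appendix lemma cannot be cited for your configuration: inside a vertex star each triangle has only \emph{two} edge-neighbors, so its hypotheses are never met there. Your claim is in fact true, but it needs an argument of its own. One route: with $x_0=0$ the centroids are $c_i=(v_i+v_{i+1})/3$; if all lie on a line, rotate so the line is $\{y=c\}$, giving $v_{i,y}+v_{i+1,y}=3c$ cyclically, hence $v_{i+2,y}=v_{i,y}$. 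For odd vertex degree this puts all link vertices on a single horizontal line, which cannot enclose $x_0$; for even degree the link vertices alternate between two horizontal lines with $x_0$ strictly between them, but the link of an interior vertex is star-shaped with respect to $x_0$, so its polygonal cyclic order equals the angular order about $x_0$, and then all vertices on the upper line would be angularly consecutive --- contradicting the alternation. Until you either supply such an argument or switch to the paper's macroelement (a triangle plus its three neighbors) and invoke the appendix lemma, the decisive step of your proof is unsupported, even though everything surrounding it is sound.
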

\begin{remark}
  The AFW finite element space is $(\bbG\cap \calP_0^{disc}
  (\calT_h)^{2 \times 2}_{skw})\times \bbU_h \times \bbS_h$.
\end{remark}
\begin{proof}
  Let the macroelements consist of a non-boundary triangle and
  the three triangles adjacent to it. On each triangle the functions
  in $\bbS_h$ are piecewise linear, so it is immediate that the
  functions orthogonal to $\calP_1^{disc}(\calT_h)^{2 \times
    2}_{sym}$ take the form
  $$
  S_h = \delta I + W(a)
  \qquad \delta, a \in \calP_1^{disc}(M).
  $$
  Arguing as in steps (1) and (2) above, it follows that $a$ and $\delta$
  are smooth, so they must be linear polynomials on $M$. If, in addition,
  $S_h$ is orthogonal to $\bbG_h^{skw}$, it follows that
  $$
  0 = \int_K a_0 + a_1 x + a_2 y = a_0 + (a_1,a_2).\xbar_K,
  \qquad K \subset M,
  $$
  where we have written $a(x,y) = a_0 + a_1 x + a_2 y$, and $\xbar_K$
  denotes the centroid of $K$. If $a(x,y)$ is non-zero it follows that
  the four centroids of the triangles $K \subset M$ lie on the line $0
  = a_0 + a_1 x + a_2 y$ which is impossible. A proof of this
  intuitively obvious geometric property is given in the Appendix.
\end{proof}

\begin{example}
  Given a triangulation $\calT_h$ of a domain $\Omega \subset \Re^3$,
  let $p_h \in \calP_2^{cont}(\calT_h)$ be piecewise
  quadratic and let $S_h = W(\nabla p_h)$. Then $S_h \in
  BDM_1(\calT_h)$ and is skew.
  
  The augmented AFW space has $\bbG^{skw} = \calP_0^{disc}(\calT_h)^{d
    \times d}_{skw}$, so $S_h \in Z^{sym}$ if
  $$
  0 
  = -\int_\Omega \nabla p_h . u_h
  = \sum_k \int_k p_h  [u_h.n] 
  \qquad u_h \in \calP_0^{disc}(\calT_h)^d,
  $$
  where the sum is over the (triangular) faces in $\calT_h$ and $n$
  denotes their normal. In this formula $[u_h.n] \equiv u_h.n$ for faces on
  the boundary.

  If $k$ is a triangle and the mid points of the edges are denoted by
  $\{\xbar_1^k, \xbar_2^k, \xbar_3^k\}$, then the quadrature rule 
  $$
  \int_k f = (|k|/3) \left(f(\xbar^k_1) 
    + f(\xbar^k_2) + f(\xbar^k_3) \vph\right)
  $$
  is exact on $\calP_2(k)$. It follows that $S_h \in Z_h^{sym}$ if
  $p_h(\xbar_i) = 0$ at the mid points of the edges in $\calT_h$.
  Upon recalling that the degrees of freedom for the piecewise
  quadratic finite element space are the function values at the
  vertices and at the mid points of the edges, it follows that
  the skew subspace of $Z^{sym}_h$ has dimension at least as
  large as the number of vertices in $\calT_h$.
\end{example}

\subsection{A New Family of Elements in 2 and 3 Dimensions}\label{sec:fespaces3d}
In this section we construct a family of composite elements that satisfy
Assumption \ref{ass:infsup}, using the div-stable elements of Scott and
Vogelius \cite{scvo85rairo,Zh05}. We make use of the following result which mirrors results shown in \cite[Theorem 9.1]{boffibook} for two dimensions and in
\cite[Proposition 4]{bobrfo09} when $d=3$.

\begin{theorem} \label{thm:bb} Let $(\bbU_h, \bbS_h) \subset \Lfour^d
  \times \bbS$ be a div-stable pair of spaces,
  $$
  \inf_{u_h \in U_h} \sup_{S_h \in \bbS_h} 
  \frac{(u_h, \dv(S_h))}{\normS{S_h} \lfour{u_h}} \geq c
  \quad \text{ and } \quad
  \dv(\bbS_h) \subset \bbU_h.
  $$
  If $V_h^d \times P_h \subset \Honeo^d \times \Ltwoo$ is a div-stable
  velocity--pressure space for the Stokes problem and $\Curl(V_h)
  \subset \bbS_h$, then $W(P_h) \times \bbU_h \times \bbS_h$ is a
  stable triple for the elasticity problem with weak symmetry.
\end{theorem}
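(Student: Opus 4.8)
The plan is to verify that $W(P_h) \times \bbU_h \times \bbS_h$ satisfies the two inf--sup conditions which, by Lemma \ref{lem:infsup} together with the remark following it, characterize stability for the elasticity problem with weak symmetry, the role of $\bbG_h^{skw}$ being played by $W(P_h)$. These conditions read
$$
\sup_{T_h \in \bbS_h} \frac{(u_h, \dv(T_h))}{\normS{T_h}} \geq c\, \lfour{u_h}, \quad u_h \in \bbU_h,
$$
and
$$
\sup_{T_h \in Z_h} \frac{(\Gskew_h, T_h)}{\normS{T_h}} \geq c\, \ltwo{\Gskew_h}, \quad \Gskew_h \in W(P_h),
$$
where $Z_h = \{T_h \in \bbS_h \sst (u_h, \dv(T_h)) = 0,\ u_h \in \bbU_h\}$. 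The first is precisely the hypothesis that $(\bbU_h,\bbS_h)$ is div--stable, so no work is needed there; the content of the theorem is the skew condition.

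First I would pin down $Z_h$. Since $\dv(\bbS_h) \subset \bbU_h$, the choice $u_h = \dv(T_h)$ forces every $T_h \in Z_h$ to be divergence free, so $Z_h = \{T_h \in \bbS_h \sst \dv(T_h) = 0\}$ and $\normS{T_h} = \ltwo{T_h}$ on $Z_h$. Because $\dv \circ \Curl = 0$ and, by hypothesis, $\Curl(V_h) \subset \bbS_h$, it follows that $\Curl(V_h) \subset Z_h$; these are the test tensors with which I would realize the supremum.

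The heart of the argument is a differential identity relating $W$ and $\Curl$. Since $W(a)$ is skew, only the skew part of $\Curl(\psi)$ contributes to the pairing, and in two dimensions a direct computation gives $(W(a), \Curl(\psi)) = (a, \dv(\psi))$, which converts the skew quotient into the classical Stokes mixed form. Given $\Gskew_h = W(a_h)$ with $a_h \in P_h$, div--stability of the Stokes pair $V_h^d \times P_h$ supplies $\psi_h \in V_h^d$ with $(a_h, \dv(\psi_h)) \geq \beta\, \ltwo{a_h}\, \hone{\psi_h}$. Setting $T_h = \Curl(\psi_h) \in Z_h$ and using $\ltwo{\Curl(\psi_h)} \leq C\, \hone{\psi_h}$ together with $\ltwo{W(a_h)} = \sqrt{2}\, \ltwo{a_h}$ then yields
$$
\frac{(W(a_h), T_h)}{\normS{T_h}}
= \frac{(a_h, \dv(\psi_h))}{\ltwo{\Curl(\psi_h)}}
\geq \frac{\beta}{C}\, \ltwo{a_h}
= \frac{\beta}{C\sqrt{2}}\, \ltwo{W(a_h)},
$$
which is the desired skew estimate.

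The main obstacle is the three dimensional analogue of the identity $(W(a),\Curl(\psi)) = (a,\dv(\psi))$. There $a$ is vector valued, $\psi \in V_h^{d \times d}$, and the axial vector of the skew part of $\Curl(\psi)$ is a genuine first order operator of $\psi$ rather than simply $(1/2)\dv(\psi)$; one must show that the scalar Stokes inf--sup can nonetheless be applied, componentwise, to recover each component of $a$ while retaining $H^1$ control on $\psi$. This is exactly the reduction carried out in \cite[Proposition 4]{bobrfo09}, which I would invoke rather than reprove. A minor secondary point is to confirm that $\Curl(\psi_h) \in \bbS$, in particular that $\int_\Omega tr(\Curl(\psi_h)) = 0$; this follows from $V_h \subset \Honeo$ by an integration by parts.
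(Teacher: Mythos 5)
Your proof is correct, but it is worth noting that the paper itself never proves Theorem \ref{thm:bb}: it is quoted as a known result, with \cite[Theorem 9.1]{boffibook} cited for $d=2$ and \cite[Proposition 4]{bobrfo09} for $d=3$. Your two--dimensional argument --- reduce via Lemma \ref{lem:infsup} and the remark following it to the div--stability condition (a hypothesis) plus the skew inf--sup condition over $Z_h$, observe that $\dv(\bbS_h)\subset\bbU_h$ forces $Z_h$ to consist of divergence--free tensors with $\normS{T_h}=\ltwo{T_h}$, then test with $T_h=\Curl(\psi_h)$ and use the pointwise identity $W(a):\Curl(\psi)=a\,\dv(\psi)$ to import the Stokes inf--sup --- is precisely the mechanism behind the cited two--dimensional result, so you have in effect supplied the argument the paper omits. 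For $d=3$ you defer to \cite[Proposition 4]{bobrfo09}, which is exactly the move the paper makes, and your identification of the obstruction (the axial vector of the skew part of $\Curl(\psi)$ is no longer a componentwise divergence) is the right reason why the planar identity does not carry over. One point your write--up handles that the paper glosses over: the cited results are formulated for the $H(\dv)$ norm with $L^2$ divergence and $L^2$ velocities, whereas here $\normS{\cdot}$ carries an $L^{4/3}$ divergence term and $\bbU=\Lfour^d$; your observation that the norm discrepancy vanishes on $Z_h$, together with taking the $L^4$--$L^{4/3}$ div--stability as a hypothesis rather than something to be re--proved, is exactly what justifies transferring the references to the present setting.
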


Augmenting the spaces constructed in this theorem using Raviart--Thomas
and Scott--Vogelius elements gives a family of spaces satisfying
Assumption \ref{ass:infsup}.

\begin{lemma} \label{lem:SV}
  Let $k\ge 1$ be an integer and let $\calT_h$ be a triangulation of a
  bounded Lipschitz domain $\Omega \subset \Re^d$, $d=2$ or $d=3$,
  and let $\calT_h^r$ denote the barycentric refinement of $\calT_h$ and
  \begin{align*}
    \bbG_h &= \bbG\cap\calP_k^{disc} (\calT_h^r)^{d \times d}, \\
    \bbU_h &= \calP_k^{disc}(\calT_h^r)^d, \\
    \bbS_h &= \bbS \cap RT_k(\calT_h^r).
  \end{align*}
  If $k \geq 1$ when $d=2$ or $k \geq 2$ when $d=3$ 
  the triple $\bbG_h \times \bbU_h \times \bbS_h$ satisfies 
  Assumption \ref{ass:infsup} with constant depending only upon
  the aspect ratio of $\calT_h$.
\end{lemma}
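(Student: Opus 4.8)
The plan is to verify, for the triple $\bbG_h\times\bbU_h\times\bbS_h$, the three inf--sup conditions of part (3) of Lemma \ref{lem:infsup}, which are equivalent to Assumption \ref{ass:infsup}. The first two of these are exactly the stability conditions for the elasticity problem with weak symmetry, and I would obtain them by checking the hypotheses of Theorem \ref{thm:bb}. The third condition --- the one that distinguishes computation of the full gradient from its symmetric part, and that caused the PEERS and AFW constructions of Section \ref{sec:PEERSAFW} to fail in three dimensions --- I expect to be \emph{easy} for this family, precisely because $\bbG_h$ contains the entire space $\calP_k^{disc}(\calT_h^r)^{d\times d}$ of trace--free matrices and is therefore maximally rich.

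For the first two conditions I would apply Theorem \ref{thm:bb} with $(\bbU_h,\bbS_h)=(\calP_k^{disc}(\calT_h^r)^d,\;\bbS\cap RT_k(\calT_h^r))$. The inclusion $\dv(\bbS_h)\subset\bbU_h$ is immediate since $\dv(RT_k)=\calP_k^{disc}$, and the surjectivity of this divergence yields the required div--stability; the one point needing care is that the pairing is measured in the $\lfour{\cdot}$ and $\lfourthirds{\cdot}$ norms rather than in $L^2$, which I would handle with the Raviart--Thomas commuting interpolant and a scaling argument on the reference element. For the Stokes ingredient I would take $V_h=\calP_{k+1}^{cont}(\calT_h^r)\cap\Honeo$ and $P_h=\calP_k^{disc}(\calT_h^r)\cap\Ltwoo$ (scalar when $d=2$, $\Re^3$--valued when $d=3$, as in \cite{bobrfo09}), so that $\bbG_h^{skw}=W(P_h)$. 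One checks $\Curl(V_h)\subset\bbS_h$ because a curl is divergence free, has continuous normal components, and is piecewise $\calP_k$, while $V_h\subset\Honeo$ forces $\int_\Omega tr=0$. The essential input is then the inf--sup stability of this Scott--Vogelius pair of velocity degree $k+1$ on the barycentric (Alfeld) refinement $\calT_h^r$, which holds exactly when $k+1\ge d$; this is the sole origin of the degree thresholds $k\ge1$ for $d=2$ and $k\ge2$ for $d=3$ \cite{scvo85rairo,Zh05}.

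For the third condition I would argue directly rather than through the macroelement machinery. Any $T\in Z^{sym}_h$ satisfies $(u_h,\dv T)=0$ for all $u_h\in\bbU_h$; since $\dv T\in\calP_k^{disc}=\bbU_h$ this gives $\dv T=0$, and a divergence--free Raviart--Thomas tensor lies in $\calP_k^{disc}(\calT_h^r)^{d\times d}$ (a nonzero degree--$(k+1)$ radial part $x\otimes q$ contributes $(d+k)q\neq0$ to the divergence and cannot survive). Writing the skew part of $T$ as $W(a)$ with $a\in\calP_k^{disc}$, the orthogonality $(T,\Gskew_h)=0$ for all $\Gskew_h\in\bbG_h^{skw}$ lets us take $\Gskew_h=W(a)\in\bbG_h^{skw}$ to obtain $\int_\Omega|a|^2=0$, so $a=0$ and $T$ is symmetric. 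Thus $T=\tfrac1d(tr\,T)I+T_0$ with $T_0\in\bbG_h^{sym}$, and testing the supremum against $\Gsym_h=T_0$ gives $\sup_{\Gsym_h\in\bbG_h^{sym}}(\Gsym_h,T)/\ltwo{\Gsym_h}\ge\ltwo{T_0}$. Because $\dv T=0$ we have $\normS{T}=\ltwo{T}$, and Lemma \ref{lem:trace} applied to the divergence--free symmetric tensor $T$ gives $\ltwo{tr\,T}\le C\ltwo{T_0}$; combining these shows $\ltwo{T_0}\ge c\,\normS{T}$, which is the desired inf--sup inequality with a constant independent of $h$.

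The step I expect to be the main obstacle is the verification of the hypotheses of Theorem \ref{thm:bb}, and in particular the Scott--Vogelius stability on the Alfeld split together with the correct matching, when $d=3$, of the $\Re^3$--valued skew multiplier $W(P_h)$ with a \emph{vector}-valued Stokes problem; the nonstandard $L^4$--$L^{4/3}$ div--stability of the Raviart--Thomas pair is a secondary technical point handled by a Fortin/scaling argument. By contrast the third inf--sup condition, which is genuinely delicate for the smaller PEERS and AFW families and there requires the full macroelement analysis, collapses here to the two elementary observations above --- that $Z^{sym}_h$ consists of symmetric divergence--free tensors and that Lemma \ref{lem:trace} controls their trace --- precisely because $\bbG_h$ carries the complete discontinuous polynomial space of trace--free matrices.
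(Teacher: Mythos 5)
Your proposal is correct and takes essentially the same route as the paper: the elasticity-with-weak-symmetry conditions follow from Theorem \ref{thm:bb} applied to the Scott--Vogelius pair $\calP^{cont}_{k+1}(\calT_h^r)^d \times \calP_k^{disc}(\calT_h^r)$ (whence the thresholds $k\ge 1$ for $d=2$, $k\ge 2$ for $d=3$), and the remaining condition follows because tensors in $Z_h^{sym}$ are divergence free, hence piecewise polynomial of degree $k$ and pointwise symmetric, so that testing with the trace-free part and invoking Lemma \ref{lem:trace} gives the bound. Your write-up merely fills in details the paper leaves implicit (the $L^4$--$L^{4/3}$ div-stability of the Raviart--Thomas pair and the vanishing of the radial part for divergence-free $RT_k$ functions).
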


\begin{proof}
  Under the assumptions stated on $k$, the Scott--Vogelius space
  $(V_h^d, P_h) \equiv \calP^{cont}_{k+1}(\calT_h^r)^d \times
  \calP_k^{disc}(\calT_h^r)$ is a div--stable element for the Stokes
  problem \cite{Zh05}; moreover, $\Curl(V_h) \subset RT_k(\calT_h^r)$ since
  (i) functions in $\Curl(V_h)$ belong to $\Hdiv$, and (ii) the
  Raviart--Thomas spaces contain all piecewise polynomials of degree
  $k$ in $\Hdiv$. It follows that $W(P_h) \times \bbU_h \times \bbS_h$
  is a stable triple for the elasticity problem with weak symmetry.

  Upon recalling that the divergence free functions in $RT_k$
  are piecewise polynomials of degree $k$, it follows 
  that functions in $Z^{sym}$ are symmetric (pointwise), and
  the inf-sup condition in Assumption \ref{ass:infsup} follows
  upon setting $G_h^{sym}$ to be the trace-free part of $S_h$ and using
  Lemma \ref{lem:trace} to bound the trace.
\end{proof}

Condensing out internal degrees of freedom from composite elements
significantly reduces the number of unknowns. The following example
illustrates this for the lowest order two dimensional element.

\begin{figure}[!ht]
\centerline{
\subfigure[$\bbS_h$ on $K^r$]
 {\label{fig:rt1comp}
 \includegraphics[width=1.25in]{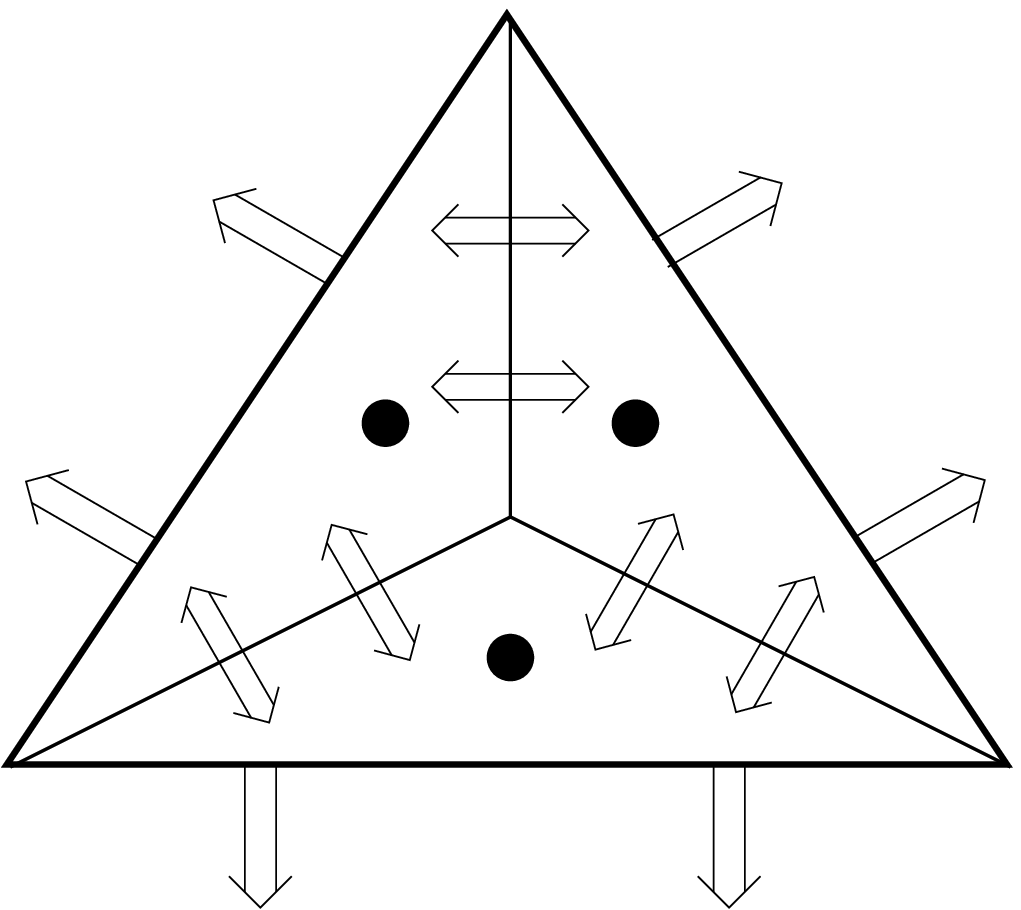}
 }\hskip 1in
\subfigure[Condensed $\bbS_h$]
 {\label{fig:rt1cond}
 \includegraphics[width=1.25in]{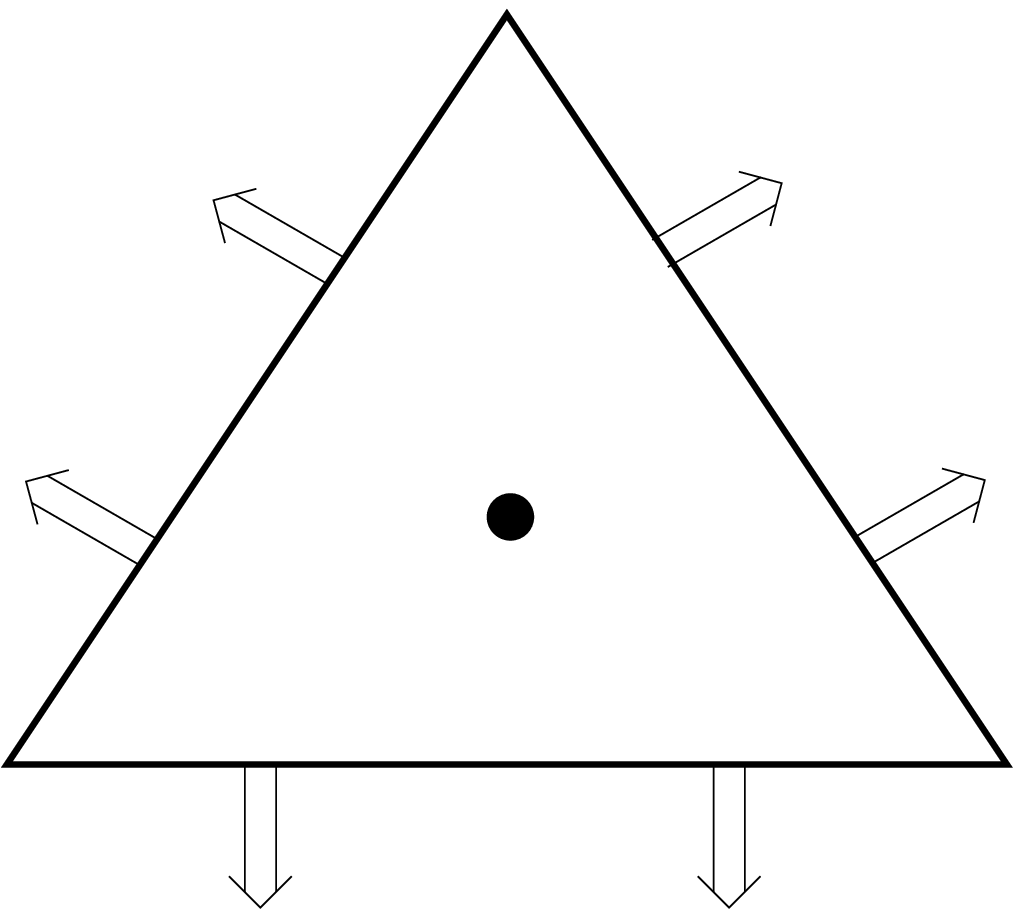}
 }}
\caption{Degrees of freedom for the lowest order ($k=1$)
  element in two space dimensions, which consists of two rows, each of
  which are first-order Raviart--Thomas vectors.  The double arrows
  represent continuity of the normal components of each row of $S_h$,
  and the dot represents the four internal degrees of freedom of $S_h$
  (two per row).  The diagram on the left represents $\bbS_h$ on a
  barycentric refined mesh, and the diagram on the right represents the
  condensed element.}\label{fig:rt1}
\end{figure}

\begin{example}
If $K \in \calT_h$ is a triangle and $K^r$ is its barycentric
refinement, the lowest order two dimensional element ($k=1$) would have
(see Figure \ref{fig:rt1comp})
$$
dim(\bbG_h(K^r)) = 27, \quad
dim(\bbU_h(K^r)) = 18, \qquad
dim(\bbS_h(K^r)) = 36.
$$
Consider then the two subspaces of weakly symmetric tensors on $K^r$
\begin{align*}
  \bbS^{sym}_h(K^r) &= \{S_h \in \bbS_h(K^r) \sst 
  \int_K S_h:G_h = 0, \,\, G_h \in \bbG_h(K^r)^{skw} \}, \\
  \bar{\bbS}_h(K) &= \{S_h \in \bbS_h^{sym}(K^r) \sst 
  \dv(S_h) \in \calP_1(K) \}.
\end{align*}
Then $\bar{\bbS}_h(K)$ has dimension 15 and a set of degrees of
freedom almost identical to to $RT_1(K)$; namely, the trace of $Sn$ on
each edge of $K$ and the average of the {\em symmetric} part of $S_h$
over $K$ (the average of the skew part being zero), see Figure
\ref{fig:rt1cond}. This gives rise to a decomposition
$$
\bbS^{sym}_h(K^r) = \bar{\bbS}_h(K) + \bbS^0_h(K^r),
\qquad
\bbS^0_h(K^r) = \{S_h \in \bbS_h(K^r) \sst S_h n = 0 \text{ on }
\partial K\}.
$$
The degrees of freedom for $\bbS^0_h(K^r)$ would be the ones illustrated
in Figure \ref{fig:rt1comp} which are interior to $K$. 

Let
$$
\bar{\bbU}_h =  \calP_1^{disc}(\calT_h)^2
\quad \text{ and } \quad
\bar{\bbS}_h = \bbS \cap \{S_h \in \bbS \sst S_h|_K \in \bar{\bbS}_h(K),
\,\, K \in \calT_h\}.
$$
Then solutions of the Navier--Stokes problem would seek
$(\ubar_h, \Sbar_h) \in \bar{\bbU}_h \times \bar{\bbS}_h$ such
that
\begin{gather*}
\int_\Omega G_h u_h. \vbar_h - \dv(\Sbar_h + S^0_h). \vbar_h 
= \int_\Omega f.\vbar_h,
\qquad \vbar_h \in \bar{\bbU}_h \\
\int_\Omega G_h:\Tbar_h + \ubar_h. \dv(\Tbar_h) = 0,
\qquad \Tbar_h \in \bar{\bbS}_h,
\end{gather*}
where on each element
$$
(G_h, u_h, S^0_h)
\in 
\calP_1^{disc}(K^r)^{2 \times 2} \times 
\calP_1^{disc}(K^r)^2 \times 
\bbS^0_h(K^r)
$$
are determined from $(\ubar_h,\Sbar_h)$ as the solution of the local problem: 
\begin{gather*}
\int_K \calA(G_h):H_h - (1/2)(u_h \otimes u_h):H_h
- (\Sbar + S^0_h):H_h = 0, 
\qquad H_h \in \calP_1^{disc}(K^r)^{2 \times 2} \\
\int_K G_h u_h . v_h - \dv(\Sbar_h + S^0_h). v_h 
= \int_\Omega f .v_h,
\qquad v_h \in \calP_1^{disc}(K^r)^2 \\
\int_K G_h:T^0_h + u_h. \dv(T^0_h) = 0,
\qquad T^0_h \in \bbS^0_h(K^r), \\
\int_K u_h.\vbar_h = \int_K \ubar_h.\vbar_h, 
\qquad \vbar_h \in \calP_1^{disc}(K^r)^2.
\end{gather*}
The last equation can be eliminated if a basis for the orthogonal
decomposition $\calP_1^{disc}(K^r)^2 = \calP_1^{disc}(K)^2
\oplus (\calP_1^{disc}(K)^2)^\perp$ is available.
\end{example}

\section{Numerical Examples} \label{sec:numerics}
The following non-homogeneous solution of the two dimensional Navier--Stokes equations is the stationary analog of the solution from
\cite{sh93}.
\begin{align*}
u &= \left( (-m/k) \sin(kx)\cos(my), \cos(kx) \sin(my) \vph\right)^T \\
p &= (-1/2) \left( |u|^2 + (1-(m/k)^2) \sin^2(kx) \sin^2(my) \vph\right) 
\end{align*}
with right hand side $f = \nu(k^2+m^2) u$. The computational domain
was chosen to be $\Omega = (-1,1)^2$, the traction boundary
condition was specified on the right edge ($x=1$), and Dirichlet data was specified on the
remainder of the boundary.  Triangulations were formed by sub-dividing
the square uniformly into squares of edge length $h=2/N$ and dividing
each of these into two triangles. The parameters were selected to be $k
= \pi$, $m = \pi/2$, and $\nu = 1/20$.

Figures \ref{tbl:NavierStokesAFW} and \ref{tbl:NavierStokesSV} tabulate
the $L^2(\Omega)$ errors of the approximate solutions computed using the dual--mixed
formulation with the augmented AFW and new element respectively. A first
order rate of convergence for the augmented AFW element is observed and
a second order rate for the new element is achieved for the finer meshes.

\begin{table}
  \small
  \begin{center}
  \begin{tabular}{|c|ccccc|} \hline
    $h$ & $G^{sym}$ & $G^{skw}$ & $u$ & $S$ & $\dv(S)$ \\ \hline
    $1/4 $ & 
    6.883930e-01 & 6.544852e-01 & 2.312414e-01 & 1.505661e-01 & 2.405736e-01 \\
    $1/8 $ & 
    3.314210e-01 & 3.269643e-01 & 1.157281e-01 & 6.841425e-02 & 1.205427e-01 \\
    $1/16$ & 
    1.637091e-01 & 1.631461e-01 & 5.785624e-02 & 3.320099e-02 & 6.013830e-02 \\
    $1/32$ & 
    8.157049e-02 & 8.150047e-02 & 2.892592e-02 & 1.646940e-02 & 3.004137e-02 \\
    $1/64$ & 
    4.074540e-02 & 4.073677e-02 & 1.446263e-02 & 8.218179e-03 & 1.501600e-02 \\
    $1/128$& 
    2.036695e-02 & 2.036590e-02 & 7.231270e-03 & 4.107087e-03 & 7.507253e-03
    % Number of unknowns  1,181,696 (nex = 256)
    \\ \hline
    Norm & 2.776802 & 2.776802 & 1.118034 & 0.905688 & 0.927988  \\ \hline
    Rate &  1.0135 & 1.0013  & 0.9999  & 1.0333  & 1.0008   \\ \hline
  \end{tabular}
  \end{center}
\normalsize
  \caption{$\Ltwo$ errors for the dual--mixed formulation of the Navier--Stokes 
  problem using the augmented AFW element.}
  \label{tbl:NavierStokesAFW}
\end{table}

\begin{table}
\small
\begin{center}
\begin{tabular}{|c|ccccc|} \hline
  $h$ & $G^{sym}$ & $G^{skw}$ & $u$ & $S$ & $\dv(S)$ \\ \hline
  $1/4 $ & 
  2.451267e-01 & 4.576332e-01 & 2.399623e-02 & 3.780692e-02 & 9.647025e-02 \\
  $1/8 $ & 
  8.182080e-02 & 2.012711e-01 & 5.839852e-03 & 1.210886e-02 & 4.150840e-02 \\
  $1/16$ & 
  2.414153e-02 & 7.073387e-02 & 1.284591e-03 & 3.530203e-03 & 1.453778e-02 \\
  $1/32$ & 
  6.452742e-03 & 2.032373e-02 & 2.863796e-04 & 9.407521e-04 & 4.194951e-03 \\
  $1/64$ & 
  1.650640e-03 & 5.324535e-03 & 6.817378e-05 & 2.404370e-04 & 1.103307e-03 \\
  % Number of unknowns  1,868,800 (rectangle0.016R.tplc) 
  $1/128$ & 
  4.159781e-04 & 1.352113e-03 & 1.679452e-05 & 6.057340e-05 & 2.807632e-04 
  % Number of unknowns  7,473,152 (rectangle0.008R.tplc) 
  \\ \hline
  Norm & 2.776802 & 2.776802 & 1.118034 & 0.905688 & 0.927988  \\ \hline
  Rate & 1.9543   & 1.9060   & 2.0842   & 1.9563   & 1.9010  \\ \hline
\end{tabular}
\end{center}
\normalsize
\caption{$\Ltwo$ errors for the dual--mixed formulation of the Navier--Stokes problem
  using the new element (rate for $h \in \{ 1/16, 1/32, 1/64, 1/128\}$).}
\label{tbl:NavierStokesSV}
\end{table}

\def\ocirc#1{\ifmmode\setbox0=\hbox{$#1$}\dimen0=\ht0 \advance\dimen0
  by1pt\rlap{\hbox to\wd0{\hss\raise\dimen0
  \hbox{\hskip.2em$\scriptscriptstyle\circ$}\hss}}#1\else {\accent"17 #1}\fi}

\appendix

\section{Collinearity of Triangle Centroids}

The following lemma was used in the proof of Lemma \ref{lem:bdm}.

\begin{lemma} Let $\{K_i\}_{i=0}^3$ be triangles in the plane with
  disjoint interiors and let each of $K_1$, $K_2$, and $K_3$ have an
  edge in common with $K_0$. Then the centroids of the four triangles
  are not collinear.
\end{lemma}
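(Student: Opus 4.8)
The plan is to argue by contradiction: assume the four centroids lie on a common line $\ell$ and show that two of the triangles must then share interior points. Denote the vertices of $K_0$ by $A,B,C$ and the apex of the flap across the opposite edge by $A_1,B_1,C_1$, so that $K_1=\triangle BCA_1$, $K_2=\triangle CAB_1$, $K_3=\triangle ABC_1$. Since the centroid of $K_0$ is $(A+B+C)/3$ and that of $K_1$ is $(B+C+A_1)/3$, the displacement between these two centroids is $(A_1-A)/3$, and similarly for the other two flaps. Hence, if all four centroids lie on $\ell$ with direction $d$, the three (nonzero) vectors $A_1-A$, $B_1-B$, $C_1-C$ are all parallel to $d$.

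First I would record two consequences of this parallelism. Writing $e=d^{\perp}$ and using $d\cdot e=0$, each apex has the same $e$-coordinate as the vertex of $K_0$ opposite to its edge; e.g.\ $A_1\cdot e=A\cdot e$. Moreover $d$ cannot be parallel to any edge of $K_0$: if, say, $d\parallel BC$, then $A_1-A\parallel BC$ would place $A_1$ on the same side of the line $BC$ as $A$, contradicting that the flap lies on the opposite side. Consequently the three vertices of $K_0$ have pairwise distinct $e$-coordinates, so after relabelling $A,B,C$ (and the flaps accordingly, which is legitimate by the symmetry of the statement) I may assume there is a unique middle vertex $M$ with $e_L<e_M<e_H$, where $L,H$ denote the other two vertices.

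The heart of the argument is to intersect everything with the line $m=\{\,x : x\cdot e=e_M\,\}$ through $M$ in the direction $d$. Three of the four triangles contain $M$, namely $K_0=\triangle MLH$, the flap across $ML$ (whose apex has $e$-coordinate $e_H$), and the flap across $MH$ (whose apex has $e$-coordinate $e_L$). In each of these three triangles the two vertices other than $M$ have $e$-coordinates $e_L$ and $e_H$, hence lie strictly on opposite sides of $m$; therefore $m$ enters the interior of each triangle and cuts a nondegenerate chord running from the common vertex $M$ to the opposite edge. We thus obtain three chords issuing from the single point $M$ along the one line $m$. A line offers only two directions from $M$, so by the pigeonhole principle two of the three chords emanate in the same direction and overlap in a segment of positive length; the relative interior of that segment lies in the interiors of two distinct triangles, contradicting the hypothesis that all four triangles have disjoint interiors.

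I expect the main obstacle to be the bookkeeping that sets up the pigeonhole cleanly: one must verify that $m$ genuinely crosses the interior of each of the three triangles sharing $M$ (for which the straddling observation $e_L<e_M<e_H$ is exactly what is needed) and that each resulting chord is nondegenerate, i.e.\ the opposite edge is met at a point other than $M$. The geometric content—that the disjoint-interiors hypothesis is indispensable—is confirmed by noting that dropping it would permit the apexes to be placed collinearly in the $d$-direction; it is precisely the three-chords-from-one-point obstruction that rules this out.
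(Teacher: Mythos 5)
Your proof is correct, and it takes a genuinely different route from the paper's. The paper also argues by contradiction, but then normalizes coordinates: since affine maps send centroids to centroids, it places $K_0$ at vertices $0$, $-3e_1$, $-3e_2$, observes that two of the three flap centroids lie on the same side of the centroid of $K_0$ along the putative line $\ell$, writes the collinearity of those three centroids as $\lambda(w - e_1 - c) = v - e_2 - c$, and derives the identity $w_1 v_2 - v_1 w_2 = 1 + v_1 + w_2 > 0$, which contradicts the ordering inequality $v_1 w_2 \geq w_1 v_2$ forced by the requirement that the two flaps not overlap. Your argument replaces this computation with a structural observation the paper never makes explicit: the centroid of each flap differs from the centroid of $K_0$ by exactly one third of (apex minus opposite vertex), so collinearity forces all three apex displacements to be parallel to the direction $d$ of $\ell$; projecting onto $d^{\perp}$ then produces a strict middle vertex $M$, and the pigeonhole argument on the two rays of the line through $M$ in direction $d$ --- three nondegenerate chords issuing from a single point --- yields the overlap contradiction. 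Your approach buys a coordinate-free proof with no affine normalization and no case discussion (the paper's ``assume $\ell$ exits on the right, otherwise reflect''), and it localizes precisely where disjointness of interiors is used; the paper's approach buys brevity via one explicit algebraic identity. Two points you should spell out in a final write-up, both of which the paper also leaves implicit: (i) the three flaps are attached to three \emph{distinct} edges of $K_0$ (two triangles glued to the same edge on its outer side would have overlapping interiors), which is what justifies the labelling $K_1 = \triangle BCA_1$, $K_2 = \triangle CAB_1$, $K_3 = \triangle ABC_1$; and (ii) the fact that each apex lies strictly on the far side of the line through its shared edge is itself a consequence of the disjoint-interiors hypothesis, and it is needed both for your claim that $d$ is parallel to no edge and for the chords to be well defined.
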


\begin{figure}
\begin{center}
\psfrag{vv}{$3v$}
\psfrag{ww}{$3w$}
\psfrag{ll}{$\ell$}
\includegraphics[height=2.0in, angle=0]{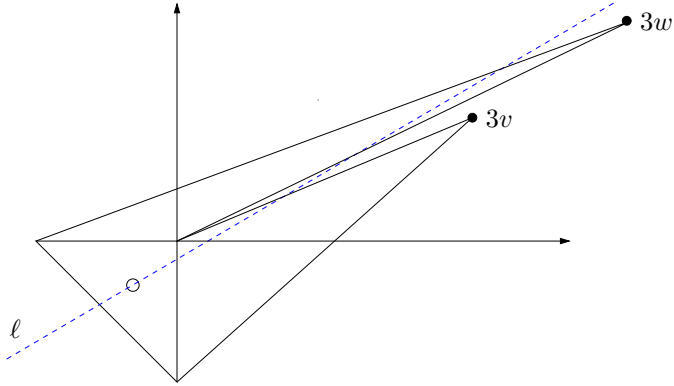}
\caption{Centroids of the three triangles can not lie on the dashed line.} 
\label{fig:infSup2D}
\end{center}
\end{figure}

\begin{proof} To obtain a contradiction, let $\ell$ be a line
  containing all four centroids, then two of the three centroids of $K_1$,
  $K_2$ and $K_3$ lie on one side of the centroid of $K_0$. Since
  averages map to averages under affine maps, it suffices to consider
  the situation where $K_0$ is the triangle with coordinates $(0,
  -3e_1, -3e_2)$ and and the centroids of the triangles sharing the
  top and right edges of $K_0$ have their centroids on the same side
  of $\ell$. Assume without loss of generality that $\ell$ exits $K_0$
  on the right, so that it has slope less than $1$ (otherwise reflect
  about the line $y=x$).

  Let the top triangle have vertex $3w$ and the triangle on the right have
  vertex $3v$, so that the centroids are
  $$
  c_{top} = (1/3)(0 + 3w - 3e_1) = w-e_1,
  \quad \text{ and } \quad
  c_{right} = (1/3)(0 + 3v - 3e_2) = v-e_2.
  $$
  Since the top triangle (a) has all its vertices above the $x$--axis
  and (b) intersects $\ell$, it follows that $w$ is in the positive
  quadrant, as shown in Figure \ref{fig:infSup2D}.
  
  If the centroids $c_{top}$, $c_{right}$ and $c = (-1,-1)$ of $K_0$ are
  collinear, there exists $\lambda > 0$ such that the equation
  $$
  \lambda (c_{top} - c) = c_{right} - c,
  \quad \text{ i.e. } \quad
  \lambda (w-e_1 - c) = v-e_2 - c,
  $$
  has a solution with $w$ in the positive quadrant, $\lambda > 0$ and
  $v_1 > 0$.  Writing out the two components of this shows
  $$
  \left.
    \begin{array}{c} 
      \lambda w_1  = v_1 + 1 \\ \lambda (w_2 + 1)  = v_2
    \end{array}
  \right\} \quad \Rightarrow \quad
  w_1 v_2 = (v_1+1)(w_2+1).
  $$
  Note that $v_1 > 0$ and the second equation shows $v_2 > 0$, so $v$ is
  also in the positive quadrant. 
  
  Next, geometric consistency (triangle interiors do not intersect)
  requires that the angle $w$ forms with the $x$-axis to be greater than the
  angle $v$ forms;
  $$
  w_2 / w_1 \geq v_2 / v_1,
  \quad \text{ or } \quad
  v_1 w_2 \geq w_1 v_2.
  $$
  Expanding the equation $w_1 v_2 = (v_1+1)(w_2+1)$ shows
  $$
  w_1 v_2 - v_1 w_2 = 1 + v_1 + w_2,
  $$
  and the inequality $0 \geq w_1 v_2 - v_1 w_2$ shows no solution exits.
\end{proof}


\begin{thebibliography}{10}

\bibitem{arbrdo84}
{\sc D.~N. Arnold, F.~Brezzi, and J.~Douglas, Jr.}, {\em P{EERS}: a new mixed
  finite element for plane elasticity}, Japan J. Appl. Math., 1 (1984),
  pp.~347--367.

\bibitem{ardogu84}
{\sc D.~N. Arnold, J.~Douglas, Jr., and C.~P. Gupta}, {\em A family of higher
  order mixed finite element methods for plane elasticity}, Numer. Math., 45
  (1984), pp.~1--22.

\bibitem{arfawi07}
{\sc D.~N. Arnold, R.~S. Falk, and R.~Winther}, {\em Mixed finite element
  methods for linear elasticity with weakly imposed symmetry}, Math. Comp., 76
  (2007), pp.~1699--1723 (electronic).

\bibitem{ba76}
{\sc J.~Barlow}, {\em Optimal stress location in finite element method},
  Internat. J. Numer. Methods Engrg., 10 (1976), pp.~243--251.

\bibitem{boffibook}
{\sc D.~Boffi, F.~Brezzi, L.~F. Demkowicz, R.~G. Dur{\'a}n, R.~S. Falk, and
  M.~Fortin}, {\em Mixed finite elements, compatibility conditions, and
  applications}, vol.~1939 of Lecture Notes in Mathematics, Springer-Verlag,
  Berlin, 2008.
\newblock Lectures given at the C.I.M.E. Summer School held in Cetraro, June
  26--July 1, 2006, Edited by Boffi and Lucia Gastaldi.

\bibitem{bobrfo09}
{\sc D.~Boffi, F.~Brezzi, and M.~Fortin}, {\em Reduced symmetry elements in
  linear elasticity}, Commun. Pure Appl. Anal., 8 (2009), pp.~95--121.

\bibitem{brdoma85}
{\sc F.~Brezzi, J.~Douglas, Jr., and L.~D. Marini}, {\em Two families of mixed
  finite elements for second order elliptic problems}, Numer. Math., 47 (1985),
  pp.~217--235.

\bibitem{brfo91}
{\sc F.~Brezzi and M.~Fortin}, {\em Mixed and hybrid finite element methods},
  vol.~15 of Springer Series in Computational Mathematics, Springer-Verlag, New
  York, 1991.

\bibitem{brrara80}
{\sc F.~Brezzi, J.~Rappaz, and P.-A. Raviart}, {\em Finite-dimensional
  approximation of nonlinear problems. {I}. {B}ranches of nonsingular
  solutions}, Numer. Math., 36 (1980/81), pp.~1--25.

\bibitem{cawazh10}
{\sc Z.~Cai, C.~Wang, and S.~Zhang}, {\em Mixed finite element methods for
  incompressible flow: stationary {N}avier-{S}tokes equations}, SIAM J. Numer.
  Anal., 48 (2010), pp.~79--94.

\bibitem{cawa09}
{\sc Z.~Cai and Y.~Wang}, {\em Pseudostress-velocity formulation for
  incompressible {N}avier-{S}tokes equations}, Internat. J. Numer. Methods
  Fluids, 63 (2010), pp.~341--356.

\bibitem{cl75}
{\sc P.~Cl{\'e}ment}, {\em Approximation by finite element functions using
  local regularization}, RAIRO Analyse Num\'erique, 9 (1975), pp.~77--84.

\bibitem{cogogu10}
{\sc B.~Cockburn, J.~Gopalakrishnan, and J.~Guzm{\'a}n}, {\em A new elasticity
  element made for enforcing weak stress symmetry}, Math. Comp., 79 (2010),
  pp.~1331--1349.

\bibitem{fama96}
{\sc M.~Farhloul and H.~Manouzi}, {\em Analysis of non-singular solutions of a
  mixed {N}avier-{S}tokes formulation}, Comput. Methods Appl. Mech. Engrg., 129
  (1996), pp.~115--131.

\bibitem{fanipa08}
{\sc M.~Farhloul, S.~Nicaise, and L.~Paquet}, {\em A refined mixed
  finite-element method for the stationary {N}avier-{S}tokes equations with
  mixed boundary conditions}, IMA J. Numer. Anal., 28 (2008), pp.~25--45.

\bibitem{fanipa09}
{\sc M.~Farhloul, S.~Nicaise, and L.~Paquet}, {\em A priori and a posteriori
  error estimations for the dual mixed finite element method of the
  {N}avier-{S}tokes problem}, Numer. Methods Partial Differential Equations, 25
  (2009), pp.~843--869.

\bibitem{GiRa79}
{\sc V.~Girault and P.~A. Raviart}, {\em Finite Element Approximation of the
  Navier Stokes Equations}, no.~749 in Lecture Notes in Mathematics, Springer
  Verlag, Berlin, Heidelbert, New York, 1979.

\bibitem{gogu10}
{\sc J.~Gopalakrishnan and J.~Guzm{\'a}n}, {\em A second elasticity element
  using the matrix bubble with tightened stress symmetry}, submitted,  (2010).

\bibitem{howa09}
{\sc J.~S. Howell and N.~J. Walkington}, {\em Inf-sup conditions for twofold
  saddle point problems}, Numer. Math., 118 (2011), pp.~663--693.

\bibitem{la08}
{\sc W.~Layton}, {\em Introduction to the numerical analysis of incompressible
  viscous flows}, vol.~6 of Computational Science \& Engineering, Society for
  Industrial and Applied Mathematics (SIAM), Philadelphia, PA, 2008.

\bibitem{rath77}
{\sc P.-A. Raviart and J.~M. Thomas}, {\em A mixed finite element method for
  2nd order elliptic problems}, in Mathematical aspects of finite element
  methods ({P}roc. {C}onf., {C}onsiglio {N}az. delle {R}icerche ({C}.{N}.{R}.),
  {R}ome, 1975), Springer, Berlin, 1977, pp.~292--315. Lecture Notes in Math.,
  Vol. 606.

\bibitem{scvo85rairo}
{\sc L.~R. Scott and M.~Vogelius}, {\em Norm estimates for a maximal right
  inverse of the divergence operator in spaces of piecewise polynomials}, RAIRO
  Mod\'el. Math. Anal. Num\'er., 19 (1985), pp.~111--143.

\bibitem{sh93}
{\sc A.~Shapiro}, {\em The use of an exact solution of the navier-stokes
  equations in a validation test of a three-dimensional non-hydrostatic
  numerical model}, Mon. Wea. Rev., 121 (1993), pp.~2420--2425.

\bibitem{Sh97}
{\sc R.~E. Showalter}, {\em Monotone operators in {B}anach space and nonlinear
  partial differential equations}, vol.~49 of Mathematical Surveys and
  Monographs, American Mathematical Society, Providence, RI, 1997.

\bibitem{st84}
{\sc R.~Stenberg}, {\em Analysis of mixed finite elements methods for the
  {S}tokes problem: a unified approach}, Math. Comp., 42 (1984), pp.~9--23.

\bibitem{st88}
\leavevmode\vrule height 2pt depth -1.6pt width 23pt, {\em A family of mixed
  finite elements for the elasticity problem}, Numer. Math., 53 (1988),
  pp.~513--538.

\bibitem{Te77}
{\sc R.~Temam}, {\em Navier-Stokes Equations}, North Holland, 1977.

\bibitem{Zh05}
{\sc S.~Zhang}, {\em A new family of stable mixed finite elements for the 3{D}
  {S}tokes equations}, Math. Comp., 74 (2005), pp.~543--554.

\bibitem{zh96}
{\sc Z.~Zhang}, {\em Ultraconvergence of the patch recovery technique}, Math.
  Comp., 65 (1996), pp.~1431--1437.

\bibitem{zitato71}
{\sc O.~C. Zienkiewicz, R.~Taylor, and J.~Too}, {\em Reduced integration
  technique in general analysis of plates and shells}, Internat. J. Numer.
  Methods Engrg., 3 (1971), pp.~275--290.

\bibitem{zizh921}
{\sc O.~C. Zienkiewicz and J.~Z. Zhu}, {\em The superconvergent patch recovery
  and a posteriori error estimates. {I}. {T}he recovery technique}, Internat.
  J. Numer. Methods Engrg., 33 (1992), pp.~1331--1364.

\bibitem{zizh922}
\leavevmode\vrule height 2pt depth -1.6pt width 23pt, {\em The superconvergent
  patch recovery and a posteriori error estimates. {II}. {E}rror estimates and
  adaptivity}, Internat. J. Numer. Methods Engrg., 33 (1992), pp.~1365--1382.

\bibitem{zizh923}
\leavevmode\vrule height 2pt depth -1.6pt width 23pt, {\em The superconvergent
  patch recovery ({SPR}) and adaptive finite element refinement}, Comput.
  Methods Appl. Mech. Engrg., 101 (1992), pp.~207--224.
\newblock Reliability in computational mechanics (Krak{\'o}w, 1991).

\end{thebibliography}
\end{document}